\newcommand{\Hugo}[1]{{\color{green!50!black} #1}}  
\def\sideremark#1{\ifvmode\leavevmode\fi\vadjust{\vbox to0pt{\vss
 \hbox to 0pt{\hskip\hsize\hskip1em
 \vbox{\hsize2.1cm\tiny\raggedright\pretolerance10000
  \noindent #1\hfill}\hss}\vbox to15pt{\vfil}\vss}}}%
\numberwithin{equation}{section}
\newtheorem{theorem}{Theorem}[section]
\newtheorem{proposition}[theorem]{Proposition}
\newtheorem{lemma}[theorem]{Lemma}
\newtheorem{remark}[theorem]{Remark}
\newtheorem{example}[theorem]{Example}
\newtheorem{corollary}[theorem]{Corollary}
\newtheorem{definition}[theorem]{Definition}
\newcommand{\ud}{\mathrm{d}}
\newcommand{\RN}{\mathbb R^N}
\newcommand{\iy}{\infty}
\newcommand{\s}{\section}
\newcommand{\DD}{\Delta}
\newcommand{\g}{\gamma}
\newcommand{\G}{\Gamma}
\newcommand{\na}{\nabla}
\newcommand{\la}{\lambda}
\newcommand{\R}{\mathbb R}
\newcommand{\al}{\alpha}
\newcommand{\ti}{\tilde}
\newcommand{\re}[1]{(\ref{#1})}
\newcommand{\rg}{\rightarrow}
\newcommand{\lan}{\langle}
\newcommand{\ran}{\rangle}
\newcommand{\e}{\varepsilon}
\newcommand{\vp}{\varphi}
\newcommand{\lab}{\label}
\newcommand{\bt}{\begin{theorem}}
\newcommand{\et}{\end{theorem}}
\newcommand{\bl}{\begin{lemma}}
\newcommand{\el}{\end{lemma}}
\newcommand{\bd}{\begin{definition}}
\newcommand{\ed}{\end{definition}}
\newcommand{\bc}{\begin{corollary}}
\newcommand{\ec}{\end{corollary}}
\newcommand{\bp}{\begin{proof}}
\newcommand{\ep}{\end{proof}}
\newcommand{\bx}{\begin{example}}
\newcommand{\ex}{\end{example}}
\newcommand{\bi}{\begin{exercise}}
\newcommand{\ei}{\end{exercise}}
\newcommand{\bo}{\begin{proposition}}
\newcommand{\eo}{\end{proposition}}
\newcommand{\br}{\begin{remark}}
\newcommand{\er}{\end{remark}}
\newcommand{\be}{\begin{equation}}
\newcommand{\ee}{\end{equation}}
\newcommand{\ba}{\begin{align}}
\newcommand{\ea}{\end{align}}
\newcommand{\bn}{\begin{enumerate}}
\newcommand{\en}{\end{enumerate}}
\newcommand{\bg}{\begin{align*}}
\newcommand{\bcs}{\begin{cases}}
\newcommand{\ecs}{\end{cases}}
\newcommand{\pl}{\partial}
\newcommand{\bean}{\begin{eqnarray*}}
\newcommand{\eean}{\end{eqnarray*}}
\renewcommand{\leq}{\leqslant}
\renewcommand{\geq}{\geqslant}
\title[Bose fluids and positive solutions to systems in dimension two]{Bose fluids and positive solutions to weakly coupled systems with critical growth in dimension two}
\author[D.~Cassani]{Daniele Cassani$^\text{1}$}
\author[H.~Tavares]{Hugo Tavares$^\text{2}$}
\author[J.~Zhang]{Jianjun Zhang$^\text{3}$}
\address[D. Cassani]{\newline\indent Dip. di Scienza e Alta Tecnologia
\newline\indent
Universit\`{a} degli Studi dell'Insubria
\newline\indent and
\newline\indent RISM--Riemann International School of Mathematics
\newline\indent Villa Toeplitz, Via G.B. Vico, 46 -- 21100 Varese}
\email{\href{mailto:Daniele.Cassani@uninsubria.it}{Daniele.Cassani@uninsubria.it}}
\address[H.~Tavares]{\newline\indent CMAFcIO, Departamento de Matem\'atica
\newline\indent
Faculdade de Ci\^encias da Universidade de Lisboa
\newline\indent Edif\'icio C6, Piso 1, Campo Grande 1749-016 Lisboa, Portugal}
\email{\href{mailto:hrtavares@ciencias.ulisboa.pt}{hrtavares@ciencias.ulisboa.pt}}
\address[J.~Zhang]{\newline\indent College of Mathematics and Statistics
\newline\indent
Chongqing Jiaotong University
\newline\indent
Chongqing 400074, PR China
\newline\indent and
\newline\indent Dip. di Scienza e Alta Tecnologia
\newline\indent
Universit\`{a} degli Studi dell'Insubria
\newline\indent
via Valleggio 11, 22100 Como,Italy}
\email{\href{mailto:zhangjianjun09@tsinghua.org.cn}{zhangjianjun09@tsinghua.org.cn}}
\thanks{(1) Corresponding author: \texttt{daniele.cassani@uninsubria.it}}
\thanks{(2) H.  Tavares  is  partially  supported  by  ERC  Advanced  Grant  2013  n.
339958  ``Complex  Patterns  for  Strongly  Interacting  Dynamical  Systems  -  COMPAT''  and  by
the Portuguese government through FCT - Funda\c c\~ao para a Ci\^encia e a Tecnologia, I.P., both under the project PTDC/MAT-PUR/28686/2017 and through the grant UID/MAT/04561/2013}
\thanks{(3) The third named author was supported by NSFC(No.11871123) and GLOCAL ERC: Ricercatori di successo internazionale per la ricerca lombarda.}
\subjclass[2010]{35A15, 35B09, 35B33, 35J47, 35J50}
\date{\today}
\keywords{Trudinger-Moser inequality, Bose-Einstein condensate, Critical growth, Elliptic systems, Variational methods.}
\begin{document}

\begin{abstract}
We prove, using variational methods, the existence in dimension two of positive vector ground states solutions for the Bose-Einstein type systems
\[
\begin{cases}
-\DD u+\la_1u=\mu_1u(e^{u^2}-1)+\beta v\left(e^{uv}-1\right) \text{ in } \Omega\ ,\\
-\DD v+\la_2v=\mu_2v(e^{v^2}-1)+\beta u\left(e^{uv}-1\right)\text{ in } \Omega\ ,\\
u,v\in H^1_0(\Omega)
\end{cases}
\]
where $\Omega$ is a bounded smooth domain, $\lambda_1,\lambda_2>-\Lambda_1$ (the first eigenvalue of $(-\Delta,H^1_0(\Omega))$, $\mu_1,\mu_2>0$ and $\beta$ is either positive (small or large) or negative (small). The nonlinear interaction between two Bose fluids is assumed to be of critical exponential type in the sense of J.~Moser. For `small' solutions the system is asymptotically equivalent to the corresponding one in higher dimensions with power-like nonlinearities.

\end{abstract}
\maketitle

\s{Introduction and main results}
\noindent

\noindent In this paper we introduce and study the following system
\begin{equation}\label{q1}
\begin{cases}
-\DD u+\la_1u=\mu_1u(e^{u^2}-1)+\beta v\left(e^{uv}-1\right)& \mbox{in}\,\,\,\Omega,\\
-\DD v+\la_2v=\mu_2v(e^{v^2}-1)+\beta u\left(e^{uv}-1\right) & \mbox{in}\,\,\,\Omega,\\
u,v>0\,\,\,\mbox{in}\,\,\,\Omega,\,\,\,\,\,\,u=v=0\,\,\mbox{on}\,\,\,\pl\Omega,
\end{cases} \end{equation}
where $\Omega\subset\R^2$ is a bounded domain with sufficiently smooth boundary,
$\la_1,\la_2> -\Lambda_1$, with $\Lambda_1=\Lambda_1(\Omega)$ the first eigenvalue of $(-\DD, H_0^1(\Omega))$. The constants $\lambda_i$ represent external Schr\"odinger potentials. The parameters $\mu_i$ take into account the nonlinear interaction due to a single component of the system, and we will consider the focusing case  $\mu_1,\mu_2>0$, whereas $\beta\neq 0$ has the effect of tuning the interaction between different components. When $\beta<0$, the interaction is of cooperative type, while for $\beta<0$ is competitive. In system \eqref{q1} the two equations are weakly coupled in the sense that $u\equiv 0$ does not necessarily imply $v\equiv 0$. Moreover, this class of systems is called of gradient or potential type as the right hand side of \eqref{q1} turns out to be the gradient of a potential function which in this case is given by
\begin{equation}\label{eq:PotH}H(u,v)=\frac{\mu_1}{2}(e^{u^2}-1-u^2)+\frac{\mu_2}{2}(e^{v^2}-1-v^2)+\beta(e^{uv}-1-uv).
\end{equation}

\noindent In 1924, S.N.~Bose \cite{BOSE} discovered the expression for the statistical function of a gas of particles having integer spin (which are then called bosons).
One year later, A.~Einstein \cite{EINSTEIN} applied the results obtained by Bose to a gas of bosons at low temperature, discovering that they can condensate, in the sense that a large fraction of them can
occupy the fundamental state (a fact which is forbidden to fermions, particles with non integer spin, because of the Pauli's exclusion principle: among a huge number of consequences, in this way Einstein showed that quantum mechanics phenomena
are not necessarily microscopic). This phenomenon is known as Bose-Einstein condensation, and the corresponding system, usually
realized by weak interacting bosonic atoms, is called Bose-Einstein condensate (BEC) and serves as prototype for a Bose fluid. More general Bose fluids can be obtained changing the interaction among the Bosons.\\
Usually, Bose fluids with contact interaction terms are described by an operator field $\hat \psi$ governed by an Hamiltonian of the form (in two dimensions)
\begin{align}
 \hat H=\int dx^2 \left( \hat \psi(x)^\dagger \left( -\frac {\hbar^2}{2m} \Delta \right) \hat \psi(x) +\frac {g_n}n \hat \psi(x)^{\dagger n} \hat \psi(x)^n -\mu \hat \psi(x)^\dagger \hat \psi(x) \right),
\end{align}
where $\mu$ is the chemical potential and $^\dagger$ denotes the Hermitian conjugate. In most cases, ``vacua'' configurations are represented by classical solutions (thus regardless of quantum interaction effects), which correspond to $\mathbb C$-valued solutions of the equations of motion, defined by the stationary point
of the Hamiltonian functional. Here $g_n$ is the coupling constant of the model of order $n$. For $n=2$ one gets the Bogoliubov equation \cite{sergio}, for higher $n$ one obtains other phenomenological models. There is no reason a priory
to consider only one monomial for the contact interaction term; polynomial interaction or even power series can be considered as well. However, in general the main difficulty in considering such more general interactions is that one cannot treat those solutions perturbatively, as the number of coupling coefficients in front of the nonlinear terms becomes infinite. 
\noindent From the Physics point of view, system \eqref{q1} describes vacua configurations of two Bose fluids with non-polynomial, actually exponential, self and reciprocal contact interactions with coupling constants $\mu_i$ and $\beta$, and chemical potentials $\lambda_i$, $i=1,2$. 

\noindent For $i=1,2$, consider the following single equation
\be\lab{lm1}
-\DD u+\la_iu=\mu_iu\left(e^{u^2}-1\right)\,\,\mbox{in}\,\,\,\Omega,\,\, u\in H_0^1(\Omega),
\ee
whose corresponding functional is given by $J_{\lambda_i,\mu_i}:H^1_0(\Omega)\to \R$,
$$
J_{\la_i,\mu_i}(u)=\frac{1}{2}\int_{\Omega}\left(|\na u|^2+\la_i u^2\right)\,\ud x-\frac{\mu_i}{2}\int_{\Omega}\left(e^{u^2}-1-u^2\right)\,\ud x.
$$(the functional is well defined by Lemma \ref{lemma:Moser} below). Notice that if $u,v$ solve the single equation \eqref{lm1} for $i=1,2$ repectively, then $(u,0)$ and $(0,v)$ solve system \eqref{q1}. This motivates the following.
\bd
We say that $(u,v)$ is a semitrivial solution of \re{q1} if  $(u,v)$ satisfies \re{q1} and  $u\not\equiv0,v\equiv0$ or $u\equiv0,v\not\equiv0$. On the other hand, $(u,v)$ is called a (nontrivial) vector solution if $(u,v)$ is a solution of \re{q1} with $u\not\equiv0,v\not\equiv0$. A pair $(u,v)$ is called positive if $u>0,v>0$ in $\Omega$.
\ed
\noindent By \cite[Theorem 1.3]{FMR}, problem \re{lm1} admits a mountain pass solution $u_{\la_i,\mu_i}$($u_i$ for short) with minimal energy $E_i<2\pi$, where
\begin{equation}\label{eq:les:Ei}
E_i=\inf\{ J_{\lambda_i,\mu_i}(u):\ u\in H^1_0(\Omega)\setminus\{0\},\ J_{\lambda_i,\mu_i}'(u)=0\}.
\end{equation}
namely a \emph{ground state} solution. Moreover, without loss of generality, we may assume that $u_i$ is positive. Next we define a few quantities which will be crucial in what follows. Set
\begin{equation}\label{eq:beta12}
\beta_1:=\mu_1\frac{\int_{\Omega}u_1^2\left(e^{u_1^2}-1\right)\,\ud x}{\int_{\Omega}u_1^2u_2^2\,\ud x},\qquad \beta_2:=\mu_2\frac{\int_{\Omega}u_2^2\left(e^{u_2^2}-1\right)\,\ud x}{\int_{\Omega}u_1^2u_2^2\,\ud x},
\end{equation}
and
\begin{equation}\label{eq:beta34}
\beta_3:=\mathcal{S}_4\min\left\{\frac{1}{2},\frac{\la_1+\Lambda_1}{2\Lambda_1}\right\}\sqrt{\frac{\mu_2}{E_1+E_2}},\qquad
\beta_4:=\mathcal{S}_4\min\left\{\frac{1}{2},\frac{\la_2+\Lambda_1}{2\Lambda_1}\right\}\sqrt{\frac{\mu_1}{E_1+E_2}},
\end{equation}
where $\mathcal{S}_4$ is the best Sobolev constant of the embedding $H_0^1(\Omega)\hookrightarrow L^4(\Omega)$, i. e.
$$
\mathcal{S}_4:=\inf_{u\in H_0^1(\Omega)\setminus\{0\}}\frac{\int_{\Omega}|\na u|^2\,\ud x}{\left(\int_{\Omega}u^4\,\ud x\right)^{\frac12}}\ .
$$
Let $\beta^\ast=\min\{\beta_i:i=1,2,3,4\}$ and let $\beta^{\ast\ast}>0$ be as in Corollary \ref{tl} (see Section \ref{sec3} below). Then, our first two main results deal respectively with the case $\beta>0$ small and large (respectively weak and strong cooperation) and read as follows:
\bt\lab{Th1} Let $\la_1,\la_2> -\Lambda_1$ and
$$
\beta_0=\min\{\sqrt{\mu_1\mu_2},\beta^\ast,\beta^{\ast\ast}\}
\ .$$
Then, for any $\beta\in(0,\beta_0)$, \eqref{q1} admits a positive vector ground state solution $(u_\beta,v_\beta)\in H^1_0(\Omega)\times H_0^1(\Omega)$. Moreover, up to a subsequence, as $\beta\rg0$, $u_\beta\rg u$ and $v_\beta\rg v$ strongly in $H_0^1(\Omega)$, where $u,v$ are ground state solutions of \re{lm1} with $i=1,2$ respectively. \et
\noindent Set
$$
\beta_5:=\mu_1\frac{\int_{\Omega}u_1^2\left(e^{u_1^2}-1\right)\,\ud x}{\int_{\Omega}u_1^4\,\ud x}\quad\text{ and }\quad  \beta_6:=\mu_2\frac{\int_{\Omega}u_2^2\left(e^{u_2^2}-1\right)\,\ud x}{\int_{\Omega}u_2^4\,\ud x}.
$$

\bt\lab{Th2} Let $\la_1,\la_2> -\Lambda_1$ and
\begin{equation}\label{eq:barbeta0}
\bar{\beta}_0=4\frac{\max\{E_1\beta_5, E_2\beta_6\}}{\min\{E_1,E_2\}}>0.
\end{equation}
Then, for any $\beta\ge\bar{\beta}_0$, \eqref{q1} admits a positive vector ground state solution $(u_\beta,v_\beta)$. Moreover, as $\beta\rg+\iy$, $u_\beta\rg 0$ and $v_\beta\rg 0$ strongly in $H_0^1(\Omega)$. \et
\br
As the value $\beta^*$ depends on the unknown embedding constant $\mathcal{S}_4$, we will establish bounds for $\beta^*$ in Section \ref{sec_lowerbeta*}.
\er

\noindent Our third main result concerns the weak competitive case (small $\beta<0$). We prove the following

\bt\lab{Th3} Let $\la_1,\la_2> -\Lambda_1$ and $\beta<0$. Then, for $|\beta|$ sufficiently small, \eqref{q1} admits a positive vector solution $(u_\beta,v_\beta)\in H^1_0(\Omega)\times H_0^1(\Omega)$. Moreover, as $\beta\rg0$, $u_\beta\rg u$ and $v_\beta\rg v$ strongly in $H_0^1(\Omega)$, where $u,v$ are ground state solutions of \re{lm1} with $i=1,2$ respectively. \et

\br
From Theorems \ref{Th1} and \ref{Th2} one has existence of solutions for $\beta$ in a neighborhood of zero as well as of infinity. We mention that for power-type nonlinearities, it turns out that actually `reasonable' solutions for $\min \{\mu_1,\mu_2\}\leq \beta\leq \max \{\mu_1,\mu_2\}$ do not exists, see \cite{sirakov} for the cubic case. Moreover, as $\beta\to -\infty$ one expects the appearance of segregation phenomena, in the sense that solutions concentrate on disjoint support, see for instance \cite{CTV1,CTV2,WeiWeth,NTTV,SoaveZilio}, or \cite{STTZ} for a recent survey on the subject. However, in dimension two and for exponential nonlinearities, analogous results seem to be out of reach at the moment, in particular due to the difficulty of obtaining existence results for $\beta$ negative.
\er

\noindent Thanks to Theorem \ref{Th2}, as $\beta\to +\infty$ one has that system \eqref{q1} is asymptotically equivalent to the following system
\begin{equation}\label{BEH}
\begin{cases}
-\DD u+\la_1u=\mu_1u^3+\beta uv^2&\mbox{in}\,\,\,\Omega,\\
-\DD v+\la_2v=\mu_2v^3+\beta u^2v &\mbox{in}\,\,\,\Omega,\\
u,v>0\,\,\,\mbox{in}\,\,\,\Omega,\,\,\,\,\,\,u=v=0\,\,\mbox{on}\,\,\,\pl\Omega,
\end{cases}
\end{equation}
since $u_\beta(e^{u_\beta^2}-1)\sim u_\beta^3$, $v_\beta(e^{v_\beta^2}-1)\sim v_\beta^3$ and $e^{u_\beta v_\beta}-1\sim u_\beta v_\beta$, as $\beta\to +\infty$.

\noindent  In recent years, existence results fo system \eqref{BEH} have been largely investigated in a series of papers, see for instance \cite{AmbrosettiColorado, ChenZou, LinWei1,LinWei2,MaiaMontefuscoPellacci,sirakov,Mandel} (see also the introduction of \cite{SoaveTavares} for more details and results regarding systems with three or more equations). In the cooperative case $\beta>0$ this has applications to nonlinear optics \cite{Phys1, Phys2}, while in the competitive case $\beta<0$ this is related to Bose-Einstein condensates \cite{Timm}.

\noindent At the best of our knowledge, the case of dimension $N=2$ with exponential nonlinearities has not yet been settled, and this paper is a first step in this direction, in the hope of stimulating further research, see \cite{DJ17} and references therein for related results.

\noindent It is well known that, coming from higher dimension $N\geq 3$ to $N=2$, nonlinear phenomena change dramatically from allowing power-like growth at infinity up to the limiting case of exponential growth.

\noindent From the point of view of functional analysis, this clue can be motivated in terms of Sobolev embeddings for which dimension two is a borderline case. In fact as $N=2$, functions with membership in the energy space $H^1_0$ turn out to belong to $L^p$ for any finite order of integrability $1\leq p<\infty$. As established by the Pohozaev-Trudinger-Moser inequality, the maximal degree of summability is of exponential type, see Section \ref{sec2}.

\noindent In the variational framework, when passing from power-like to exponential nonlinearities, extra difficulties appear due to the lack of homogeneity and the presence of infinite series of powers-like nonlinear interactions. Indeed, the main difficulty in considering \eqref{q1}, as the two dimensional counterpart of \eqref{BEH}, is that one misses the cubic homogeneity which is manifest in the right hand side of \eqref{BEH}: as we are going to see this turns out to be a main obstruction for this class of systems.

\medbreak

\noindent The paper is organized as follows. In Section \ref{sec2} we provide the variational setting of the problem, recall some properties of the single equation case \eqref{lm1}, and give a lower estimate of the constant $\beta^*$ appearing in Theorem \ref{Th1}, depending only on the parameters of the system, the first Dirichlet eigenvalue, and on the critical Moser energy level. Section \ref{sec3} deals with the case of $\beta>0$ small (weak cooperation), that is with the proof of Theorem \ref{Th1}. Section \ref{sec4} deals with the proof of Theorem \ref{Th2} (case $\beta>0$ large, strong cooperation). Finally, Section \ref{sec5} is concerned with Theorem \ref{Th3}, were existence of positive vector solutions is proved for $\beta<0$ small (weak competition) by means of  a perturbation argument.
\noindent Throughout the paper, we will denote the standard $L^p$--norms simply by $\|\cdot\|_p$, for $p\geq 1$.

\s{Preliminaries and variational setting}\label{sec2}

\subsection{The energy functional}
Since we are concerned with positive solutions to system \re{q1}, in what follows we consider the system
\be\lab{q2} \left\{
\begin{array}{ll}
-\DD u+\la_1u=H_u(u,v)\,\,\,\mbox{in}\,\,\,\Omega,\\
-\DD v+\la_2v=H_v(u,v)\,\,\,\mbox{in}\,\,\,\Omega,\\
u,v\in H_0^1(\Omega),
\end{array}
\right.
\ee
where
\begin{align*}
H(u,v)&=\frac{\mu_1}{2}G(u,u)+\beta G(u,v)+\frac{\mu_2}{2}G(v,v)
\end{align*}
and
$$
G(u,v)=e^{|uv|}-1-|uv|.
$$
(recall \eqref{eq:PotH}). Let $X=H_0^1(\Omega)\times H_0^1(\Omega)$ and define the energy functional associated with system \re{q1} as follows
$$
I(u,v)=\frac12\int_{\Omega}(|\na u|^2+|\na v|^2+\la_1u^2+\la_2v^2)-\int_{\Omega}H(u,v),\,\,(u,v)\in X.
$$
This is well defined and of class $C^2(X)$- see Proposition \ref{prop:Iwelldef} below - and its derivative is given by
\begin{align*}
\lan I'(u,v),(\vp,\phi)\ran=&\int_{\Omega}(\na u\na\vp+\na v\na\phi+\la_1u\vp+\la_2v\phi)\\
&-\int_{\Omega}H_u(u,v)\vp+H_v(u,v)\phi,\,\,(u,v), (\vp,\phi)\in X.
\end{align*}
Observe that $e^{|x|}-1-|x|\in C^2(\R)$, $H\in C^2(\R^2)$. In order to show that $I$ is well defined in $X$, we first give the following elementary inequalities. Next, we recall some standard facts.
\bl\lab{bd1} For any $x,y\in\R^+$, the following holds:
\begin{itemize}
\item [{\rm(i)}]  $(e^{xy}-1)^2\le(e^{x^2}-1)(e^{y^2}-1)$;
\item [{\rm(ii)}]  $(e^{xy}-1-xy)^2\le(e^{x^2}-1-x^2)(e^{y^2}-1-y^2)$;
\item [{\rm(iii)}]  $0\le2(e^{x}-1-x)\le x(e^{x}-1)$.
\end{itemize}
\el
\bp
For any fixed $y>0$, let
$$
f(x)=\frac{(e^{x^2}-1)(e^{y^2}-1)}{(e^{xy}-1)^2},\,\,x\in[y,\iy).
$$
Obviously,
$$
f'(x)=\frac{2(e^{y^2}-1)}{x(e^{xy}-1)^3}g(x),
$$
where $g(x)=x^2e^{x^2}(e^{xy}-1)-xye^{xy}(e^{x^2}-1)$. Let $h(t)=\frac{te^t}{e^t-1}$, then
$$
h'(t)=\frac{e^t(e^{t}-1-t)}{(e^{t}-1)^2}>0,\,\,t>0.
$$
It follows that $h(x^2)>h(xy)$ and so $g(x)>0$ for $x>y>0$. This implies that $f(x)$ is increasing in $(y,\iy)$. Noting that $f(y)=1$, the assertion $(i)$ is proved.
\vskip0.1in
Now, we prove $(ii)$. For any fixed $y>0$, let
$$
\ti{f}(x)=\frac{(e^{x^2}-1-x^2)(e^{y^2}-1-y^2)}{(e^{xy}-1-xy)^2},\,\,x\in[y,\iy).
$$
Then
$$
\ti{f}'(x)=\frac{2(e^{y^2}-1-y^2)}{x(e^{xy}-1-xy)^3}\ti{g}(x),
$$
where $\tilde g(x)=x^2(e^{x^2}-1)(e^{xy}-1-xy)-xy(e^{xy}-1)(e^{x^2}-1-x^2)$. Let $\ti{h}(t)=\frac{t(e^t-1)}{e^t-1-t}$, then
$$
\ti{h}'(t)=\frac{(e^{t}-1)^2-t^2e^t}{(e^{t}-1-t)^2},\,\,t>0.
$$
It is straightforward to check that $(e^{t}-1)^2-t^2e^t>0$ for any $t>0$. Then $\ti{h}'(t)>0$ for any $t>0$ and $\ti{h}(x^2)>\ti{h}(xy)$ for $x>y>0$. This implies that $g(x)>0$ for $x>y>0$ and $f(x)$ is increasing in $(y,\iy)$. Noting that $\ti{f}(y)=1$, the assertion $(ii)$ is concluded.
\vskip0.1in

 As for (iii), this follows by direct inspection:
\begin{align*}
2(e^x-1-x)=2\sum_{n\geq 2} \frac{x^n}{n!}=\sum_{n\geq 1}\frac{2x^{n+1}}{(n+1)!}\leq \sum_{n\geq 1} \frac{x^{n+1}}{n!}=x(e^x-1),
\end{align*}
since $2/(n+1)\leq 1$ for $n\geq 1$.
\ep
\bl\lab{AR}
For any $(x,y)\in\R^2\setminus\{0\}$,
$$
xH_x(x,y)+yH_y(x,y)\ge4H(x,y)>0\,\,\,\mbox{if}\,\,\beta>0.
$$
\el
\begin{proof}
Notice that
$$
xH_x(x,y)=\mu_1x^2(e^{x^2}-1)+\beta|xy|(e^{|xy|}-1),
$$
and
$$
yH_y(x,y)=\mu_2y^2(e^{y^2}-1)+\beta|xy|(e^{|xy|}-1).
$$
Then, by Lemma \ref{bd1}-(iii) and since $\beta>0$,
$$
xH_x(x,y)\ge 2\mu_1(e^{x^2}-1-x^2)+2\beta(e^{|xy|}-1-|xy|),
$$
and
$$
yH_y(x,y)\ge 2\mu_2(e^{y^2}-1-y^2)+2\beta(e^{|xy|}-1-|xy|).
$$
So $xH_x(x,y)+yH_y(x,y)\ge4H(x,y)$.
\end{proof}

\noindent Let us recall the well known Pohozaev-Trudinger-Moser inequality, in the form which is due to J.~Moser.
\bl\lab{Moser}{\rm\cite[Moser]{Moser}}\label{lemma:Moser} For any $u\in H_0^1(\Omega)$ and $\alpha>0$, $\mathrm{e}^{\alpha u^2}\in L^1(\Omega )$. Moreover,
\begin{equation}\label{PTM1}
\mathop{\sup_{u \in H^1_0(\Omega)}}_{\|\nabla u\|_{2}\leq 1}\int_{\Omega} \mathrm{e}^{\alpha
 u^2}\ \ud x = c(\alpha)|\Omega|
\end{equation}
with
\[
c(\alpha) <\infty   \quad \mbox{if} \quad \alpha \leq 4\pi,\qquad c(\alpha)= +\infty \quad  \mbox{if} \quad \alpha > 4\pi.
\]

\el

\noindent The following result can be found in the proof of \cite[Theorem I.6]{Lions}, see page 197 therein.
\bl\lab{Lions}{\rm\cite[P. L. Lions]{Lions}}
Assume that $\{u_n\}\subset H_0^1(\Omega)$ with $\|\na u_n\|_2\leq 1$, $u_n\rightharpoonup u\not\equiv0$ weakly in $H_0^1(\Omega)$. Then
$$
\sup_n\int_{\Omega}e^{4p\pi u_n^2}\,\ud x<\iy,\,\,\,\mbox{for any }0<p<(1-\|\na u\|_2^2)^{-1}.
$$
\el

\bo  \label{prop:Iwelldef}
For any $(u,v)\in X$, $I(u,v)$ and $I'(u,v)$ are well defined and $I\in C^2(X)$.
\eo
\bp
For any $(u,v)\in X$, by Lemma \ref{Moser} we know that $\int_{\Omega}e^{u^2}<\iy$ and $\int_{\Omega}e^{v^2}<\iy$. So $\int_{\Omega}G(u,u)<\iy$ and $\int_{\Omega}G(v,v)<\iy$. By Lemma \ref{bd1} and Schwarz's inequality, we get
$$
0\le\int_{\Omega}G(u,v)\le\int_{\Omega}\sqrt{G(u,u)G(v,v)}\le\left(\int_{\Omega}G(u,u)\right)^{1/2}\left(\int_{\Omega}G(v,v)\right)^{1/2}<\iy.
$$
It follows that $|I(u,v)|<\iy$. For any $(\vp,\phi)\in X$, to show $\lan I'(u,v),(\vp,\phi)\ran$ is well defined, it suffices to prove
$$
\left|\int_{\Omega}H_u(u,v)\vp+H_v(u,v)\phi\right|<\iy.
$$
In fact, noting that $H_0^1(\Omega)\hookrightarrow L^p(\Omega)$ for any $p\ge1$ (we are working in dimension two),
\begin{align*}
\left|\int_{\Omega}H_u(u,v)\vp\right|&\le \int_{\Omega}\mu_1|u\vp|(e^{u^2}-1)+|\beta||v\vp|(e^{|uv|}-1)\\
&\le \mu_1\left(\int_{\Omega}u^4\right)^{1/4}\left(\int_{\Omega}\vp^4\right)^{1/4}\left(\int_{\Omega}(e^{2u^2}+1)\right)^{1/2}\\
&\,\,\,\,+\beta\left(\int_{\Omega}v^4\right)^{1/4}\left(\int_{\Omega}\vp^4\right)^{1/4}\left(\int_{\Omega}(e^{|uv|}-1)^2\right)^{1/2}.
\end{align*}
By Lemma \ref{bd1}-(i), Lemma \ref{Moser} and Young's inequality,
$$
\int_{\Omega}(e^{|uv|}-1)^2\le\int_{\Omega}(e^{u^2}-1)(e^{v^2}-1)\le \int_{\Omega}(e^{2u^2}+e^{2v^2})<\iy.
$$
Then $\left|\int_{\Omega}H_u(u,v)\vp\right|<\iy$. Similarly, we have $\left|\int_{\Omega}H_v(u,v)\phi\right|<\iy.$ Thus, $I'(u,v)$ is well defined. Finally, by a standard argument, one can verify that $I\in C^2(X)$.
\ep
\subsection{The limit problem}\label{subsec:limitproblem}
For any $\la>-\Lambda_1$ and $\mu>0$, consider the following problem
\be\lab{limit}
-\DD u+\la u=\mu u\left(e^{u^2}-1\right)\,\,\mbox{in}\,\,\,\Omega,\,\, u\in H_0^1(\Omega).
\ee

\noindent By \cite[Theorem 1.3]{FMR}, problem \re{limit} admits a mountain pass solution $u_{\la,\mu}$. In fact, one can check that $u_{\la,\mu}$ is also a ground state solution, i.e., it minimizes the energy among the set of all nontrivial solutions. To be more precise, define the associated energy functional by
$$
J_{\la,\mu}(u)=\frac{1}{2}\int_{\Omega}\left(|\na u|^2+\la u^2\right)\,\ud x-\frac{\mu}{2}\int_{\Omega}\left(e^{u^2}-1-u^2\right)\,\ud x,
$$
denote the ground state level by
$$
E_{\la,\mu}:=\min\{ J_{\lambda,\mu}(u):\ u\in H^1_0(\Omega)\setminus\{0\},\ J'_{\lambda,\mu}(u)=0 \},
$$
and the set of ground state solutions of  \re{limit} by
\begin{equation}\label{eq:defSlambdamu}
S_{\la,\mu}:=\{ u\in H^1_0(\Omega)\setminus\{0\}:\ J'_{\lambda,\mu}(u)=0,\ J_{\lambda,\mu}(u)=E_{\la,\mu} \}.
\end{equation}
Then $u_{\lambda,\mu}\in S_{\la,\mu}$, which is nonempty. Moreover, $E_{\la,\mu}\in (0,2\pi)$ and
$$
E_{\la,\mu}=\inf_{u\in H_0^1(\Omega)\setminus\{0\}}\max_{t\ge0}J_{\la,\mu}(tu)=\inf_{u\in\mathcal{N}_{\la,\mu}}J_{\la,\mu}(u).
$$
where
$$
\mathcal{N}_{\la,\mu}:=\left\{u\in H_0^1(\Omega)\setminus\{0\}:\int_{\Omega}\left(|\na u|^2+\la u^2\right)\,\ud x=\mu\int_{\Omega}u^2\left(e^{u^2}-1\right)\,\ud x\right\},
$$
is the Nehari manifold. We refer to \cite{FMR} for the details.

\noindent Next we recall an Adachi-Tanaka type inequality due to Cassani-Sani-Tarsi \cite{Cassani}.

\bl {\rm\cite[Theorem 1.2]{Cassani}}\lab{at} For all $u\in H^1(\R^2)$ with $\|\na u\|_2\le1$, then the following holds
$$
\int_{\R^2}\left(e^{\gamma u^2}-1\right)\,\ud x\le\frac{\gamma d_{4\pi}}{4\pi-\gamma}\|u\|_2^2,\qquad \mbox{if}\,\,\gamma<4\pi,
$$
where
\begin{equation}\label{eq:criticalMoser}
d_{4\pi}:=\sup_{\stackrel{u\in H^1(\R^2)}{\|\na u\|_2^2+\|u\|_2^2\le1}}\int_{\R^2}\left(e^{4\pi u^2}-1\right)\,\ud x<\infty.
\end{equation}
\el
\br
From \cite[Theorem 1.2]{Cassani} we know that there exists $C>0$ such that for all $u\in H^1(\R^2)$ with $\|\na u\|_2\le1$,
$$
\int_{\R^2}\left(e^{\gamma u^2}-1\right)\,\ud x\le\frac{C}{1-\gamma/4\pi}\|u\|_2^2,\,\,\mbox{if}\,\,\gamma<4\pi.
$$
A close inspection of the proof (see page 4248 therein) shows that $C=\gamma d_{4\pi}/4\pi$. The fact that $d_{4\pi}<\infty$ is shown in \cite{Ruf}.
\er

\bl\lab{atn}
For all $u\in H^1(\R^2)$ with $\|\na u\|_2\le1$, the following holds
$$
\int_{\R^2}u^2\left(e^{\gamma u^2}-1\right)\,\ud x\le C(\gamma)\|u\|_4^4,\,\,\mbox{if}\,\,\gamma<4\pi,
$$
where
$$
C(\gamma):=4\pi\max\left\{e^{\frac{\gamma}{4\pi}}-1,\frac{16\pi(4\pi+\gamma)}{(4\pi-\gamma)^2}e^{\frac{2\gamma}{4\pi-\gamma}}\right\}.
$$
\el
\bp
The proof is similar to \cite{AT,Moser}. Without loss of generality, we only consider the functions $u\in H^1(\R^2)$ with $\|\na u\|_2\le1$, which are nonnegative, compactly supported, radially symmetric and $u(|x|)$ is decreasing in $r=|x|$. Let
$$
w(t)=2\pi^{1/2}u(r),\,\,r=|x|=e^{-t/2}.
$$
Then $w(t),w'(t)\ge0$ for all $t\in\R$ and $w(t_0)=0$ for some $t_0$. Moreover, $\int_{\R^2}|\na u|^2\,\ud x=\int_{-\iy}^{+\iy}|w'(t)|^2\,\ud t$,
$$
\int_{\R^2}u^2\left(e^{\gamma u^2}-1\right)\,\ud x=\frac{1}{4}\int_{-\iy}^{+\iy}w^2\left(e^{\frac{\gamma}{4\pi}w^2}-1\right)e^{-t}\,\ud t,
$$
and
$$
\int_{\R^2}u^4\,\ud x=\frac{1}{16\pi}\int_{-\iy}^{+\iy}w^4e^{-t}\,\ud t.
$$
Let $T_0:=\sup\{t\in\R: w(t)\le1\}$, then $w(T_0)=1$ if $T_0<\iy$. Observing that
$$
e^{\frac{\gamma}{4\pi}s}-1\le\left(e^{\frac{\gamma}{4\pi}}-1\right)s,\,\,\forall s\in[0,1],
$$
we have
\be\lab{m1}
\int_{-\iy}^{T_0}w^2\left(e^{\frac{\gamma}{4\pi}w^2}-1\right)e^{-t}\,\ud t\le\left(e^{\frac{\gamma}{4\pi}}-1\right)\int_{-\iy}^{T_0}w^4e^{-t}\,\ud t.
\ee
On the other hand, if $T_0<\iy$, then as can be seen in \cite[p. 2055]{AT}, for any $\e\in(0,\frac{4\pi}{\gamma}-1)$,
$$
w^2(t)\le(1+\e)(t-T_0)+1+\e^{-1},\,\,\forall t\ge T_0.
$$
So
\begin{align*}
&\int_{T_0}^{+\iy}w^2\left(e^{\frac{\gamma}{4\pi}w^2}-1\right)e^{-t}\,\ud t\\
&\le e^{\frac{\gamma}{4\pi}(1+\e^{-1})-T_0}\int_{T_0}^{+\iy}[(1+\e)(t-T_0)+1+\e^{-1}]e^{\left[\frac{\gamma}{4\pi}(1+\e)-1\right](t-T_0)}\,\ud t\\
&=e^{\frac{\gamma}{4\pi}(1+\e^{-1})-T_0}\left(\frac{1+\e^{-1}}{1-\frac{\gamma}{4\pi}(1+\e)}+\frac{1+\e}{\left[1-\frac{\gamma}{4\pi}(1+\e)\right]^2}\right).
\end{align*}
Noting that $w(t)\ge1$ for $t\ge T_0$, we have $\int_{T_0}^{+\iy}w^4 e^{-t}\,\ud t\ge e^{-T_0}$ and then
\begin{align*}
&
\int_{T_0}^{+\iy}w^2\left(e^{\frac{\gamma}{4\pi}w^2}-1\right)e^{-t}\,\ud t\\
&\le e^{\frac{\gamma}{4\pi}(1+\e^{-1})}\left(\frac{1+\e^{-1}}{1-\frac{\gamma}{4\pi}(1+\e)}+\frac{1+\e}{\left[1-\frac{\gamma}{4\pi}(1+\e)\right]^2}\right)\int_{T_0}^{+\iy}w^4 e^{-t}\,\ud t.
\end{align*}
Taking $\e=\frac{4\pi-\gamma}{4\pi+\gamma}\in(0,\frac{4\pi}{\gamma}-1)$ and let
$$
C(\gamma):=4\pi\max\left\{e^{\frac{\gamma}{4\pi}}-1,
e^{\frac{\gamma}{4\pi}(1+\e^{-1})}\left(\frac{1+\e^{-1}}{1-\frac{\gamma}{4\pi}(1+\e)}+\frac{1+\e}{\left[1-\frac{\gamma}{4\pi}(1+\e)\right]^2}\right)\right\},
$$
then
$$
C(\gamma):=4\pi\max\left\{e^{\frac{\gamma}{4\pi}}-1,\frac{16\pi(4\pi+\gamma)}{(4\pi-\gamma)^2}e^{\frac{2\gamma}{4\pi-\gamma}}\right\},
$$
and
\[
\int_{\R^2}u^2\left(e^{\gamma u^2}-1\right)\,\ud x\le C(\gamma)\|u\|_4^4. \qedhere
\]
\ep

\subsection{A priori estimates}
\bl\lab{priori}
The set $S_{\la,\mu}$ is compact in $H_0^1(\Omega)$ and uniformly bounded in $L^\iy(\Omega)$.
\el
\bp
\vskip0.1in
As in \cite{ZJM}, we can use the Nash-Moser iteration technique (see \cite{Gongbao}) to prove that $u\in L^\iy(\Omega)$. Actually, that same procedure allows one  to pass from $H^1_0(\Omega)$ bounds to $L^\infty(\Omega)$ bounds. In the following, we show the compactness of $S_{\la,\mu}$ which enables us to get the uniform boundedness of $S_{\la,\mu}$ in $L^\iy(\Omega)$.

\noindent Let $\{u_n\}\subset S_{\la,\mu}$. The energy $J_{\lambda,\mu}$ is clearly uniformly bounded on $S_{\lambda,\mu}$, and therefore, by using Lemma \ref{bd1}-(iii), $\lambda>-\Lambda_1$, and Poincar\'e's inequality,
\begin{align*}\lab{wo}
C&\geq J_{\lambda,\mu}(u_n)= J_{\lambda,\mu}(u_n)-\frac{1}{4}\lan J_{\lambda,\mu}'(u_n),u_n\ran\nonumber\\
&=\frac{1}{4}\int_{\Omega}\left(|\na u_n|^2+\la u_n^2\right)\,\ud x+\frac{\mu}{4}\int_{\Omega}\left[u_n^2\left(e^{u_n^2}-1\right)-2\left(e^{u_n^2}-1-u_n^2\right)\right]\,\ud x\\
&\geq \frac{1}{4}\int_{\Omega}\left(|\na u_n|^2+\la u_n^2\right)\,\ud x  =  \frac{1}{4} \int_\Omega u_n^2(e^{u_n^2}-1) \,\ud  x\ge\frac{1}{4}\min\left\{1,\frac{\la+\Lambda_1}{\Lambda_1}\right\}\int_{\Omega}|\na u_n|^2\,\ud x.\nonumber
\end{align*}
Therefore there exists $C>0$ such that for all $n$,
\be\lab{boundl}
\|\na u_n\|_2\leq C\quad\text{ and }\quad \int_{\Omega}u_n^2\left(e^{u_n^2}-1\right)\,\ud x\le C.
\ee

\noindent Moreover,
\be\lab{nee}
\liminf_{n\rg\iy}\|\na u_n\|_2\ge \rho_{\la,\mu}\ge0,
\ee
where
\begin{equation}\label{eq:rho_lamu}
\rho_{\la,\mu}:=\inf_{u\in S_{\la,\mu}}\|\na u\|_2.
\end{equation} In fact, $\rho_{\la,\mu}>0$. Otherwise, if $\rho_{\la,\mu}=0$, then there exists $\{v_n\}\subset S_{\la,\mu}$ with $\|\na v_n\|_2\rg0$ as $n\rg\iy$. It follows from Lemma \ref{atn} that
$$
\liminf_{n\rg\iy}\int_{\Omega}v_n^2\left(e^{v_n^2}-1\right)\,\ud x=0,
$$
which implies by Lemma \ref{bd1}-(iii) that
$$
\liminf_{n\rg\iy}\int_{\Omega}\left(e^{v_n^2}-1-v_n^2\right)\,\ud x=0.
$$
Then we have $\liminf_{n\rg\iy}J_{\la,\mu}(v_n)=0$, which contradicts $E_{\la,\mu}>0$.

\smallbreak

\noindent Without loss of generality, we may assume $u_n\rg u$ weakly in $H_0^1(\Omega)$, strongly in $L^2(\Omega)$ and a.e. in $\Omega$, as $n\rg\iy$. By \cite[Lemma 2.1]{FMR}, we have
that $u_n(e^{u_n^2}-1)\to u(e^{u^2}-1)$ in $L^1(\Omega)$, which yields
\begin{equation}\label{eq:FMR}
\lim_{n\to \infty} \int_\Omega (e^{u_n^2}-1-u_n^2)\,\ud x=\lim_{n\to \infty}\int_\Omega (e^{u^2}-1-u^2)\,\ud x
\end{equation}
\noindent We divide the rest of the proof in three steps:

\noindent \textbf{Step 1.} First we prove that $u\not\equiv0$. Indeed otherwise, if $u=0$ then by \eqref{eq:FMR} we have
$$
\lim_{n\rg\iy}\int_{\Omega}\left(e^{u_n^2}-1-u_n^2\right)\,\ud x=0,
$$
and so
$$
\lim_{n\rg\iy}\int_{\Omega}|\na u_n|^2\,\ud x=2\lim_{n\rg\iy}J_{\la,\mu}(u_n)=2E_{\la,\mu}<4\pi.
$$
Moreover, for some $q>1$ (sufficiently close to 1) such that $2qE_{\la,\mu}<4\pi$, by Lemma \ref{at} with $\gamma=q\|\nabla u_n\|_2^2$, for $n$ large enough one has
\begin{align*}
&\int_{\Omega}\left(e^{q u_n^2}-1\right)\,\ud x=\int_{\Omega}\left(e^{q\|\na u_n\|_2^2\left|\frac{u_n}{\|\na u_n\|_2}\right|^2}-1\right)\,\ud x\le \frac{q\|\nabla u_n\|_2^2 d_{4\pi}}{4\pi-q\|\nabla u_n\|_2^2}\frac{\|u_n\|_2^2}{\|\na u_n\|_2^2},
\end{align*}
which implies, by \re{nee} and since $u_n\rg0$ strongly in $L^2(\Omega)$, that
$$
\lim_{n\rg\iy}\int_{\Omega}\left(e^{q u_n^2}-1\right)\,\ud x=0.
$$
Then
$$
\lim_{n\rg\iy}\int_{\Omega}u_n^2\left(e^{u_n^2}-1\right)\,\ud x\le\lim_{n\rg\iy}\left[\int_{\Omega}u_n^{2p}\,\ud x\right]^{1/p}\left[\int_{\Omega}\left(e^{u_n^2}-1\right)^q\,\ud x\right]^{1/q}=0,
$$
where $p>1$ and $1/p+1/q=1$. So $\|\na u_n\|_2\rg0$, as $n\rg\iy$, which contradicts \re{nee}. Thus, $u\not\equiv0$.

\smallbreak

\noindent \textbf{Step 2.}  Next we show that
\be\lab{uppl}
\int_{\Omega}\left(|\na u|^2+\la u^2\right)\,\ud x=\int_{\Omega}\mu u^2\left(e^{u^2}-1\right)\,\ud x.
\ee
For any $\vp\in C_0^\iy(\Omega)$, by \re{boundl}, we have
$$
\sup_{n}\int_{\Omega}u_n^2(e^{u_n^2}-1)|\vp(x)|\,\ud x<\iy.
$$
Then it follows from \cite[Lemma 2.1]{FMR} that, up to a subsequence, we have
$$
\lim_{n\rg\iy}\int_{\Omega}u_n(e^{u_n^2}-1)\vp(x)\,\ud x=\int_{\Omega}u(e^{u^2}-1)\vp(x)\,\ud x.
$$
Recalling that $u_n\rightharpoonup u$ weakly in $H_0^1(\Omega)$ as $n\rg\iy$ and $u_n$ satisfies
$$
-\DD u_n+\la u_n=\mu u_n(e^{u_n^2}-1),\,\,x\in\Omega,
$$
we get that
$$
-\DD u+\la u=\mu u(e^{u^2}-1),\,\,x\in\Omega,
$$
and so \re{uppl} holds true.

\textbf{Step 3}
\noindent Since $u\not\equiv 0$, by definition of  $E_{\la,\mu}$, the semicontinuity of the norms and \eqref{eq:FMR}, we have
\[
E_{\lambda,\mu}\leq \liminf_{n\to \infty} J_{\lambda,\mu}(u_n)=E_{\lambda,\mu}.
\]
and so $J_{\la,\mu}(u)=E_{\la,\mu}$. Again by \eqref{eq:FMR}, we get that $\|\na u_n\|_2\rg\|\na u\|_2$, as $n\rg\iy$. Thus, $u_n\rg u$ strongly in $H_0^1(\Omega)$ as $n\rg\iy$. That is, $S_{\la,\mu}$ is compact in $H_0^1(\Omega)$.
\ep

\subsection{Lower bounds for  $\beta^*$}\label{sec_lowerbeta*} Let $\beta^\ast=\min\{\beta_i:i=1,2,3,4\}$, where the $\beta_i$'s are as in \eqref{eq:beta12}--\eqref{eq:beta34}. The purpose of this subsection it to give a lower estimate of $\beta^*$ which just depends on $\la_1,\la_2,\mu_1,\mu_2,\Lambda_1$ and the critical Moser energy level $d_{4\pi}$ defined in \eqref{eq:criticalMoser}.  More precisely, we prove the following

\begin{lemma}\label{lemma:estimatebeta*} We have
\begin{align*}
\beta^\ast>\min& \left\{\sqrt{\frac{\mu_1\mu_2}{32}},\sqrt{\frac{\mu_1\mu_2}{32}\frac{\la_1+\Lambda_1}{\Lambda_1}},\frac{e^{-1/3}}{48}\sqrt{\frac{\Lambda_1\mu_2}{\pi d_{4\pi}}(4\pi-1)}\min\left\{1,\frac{\la_1+\Lambda_1}{\Lambda_1}\right\},\right.\\
&\left.\,\,\,\,\,\,\,\sqrt{\frac{\mu_1\mu_2}{32}\frac{\la_2+\Lambda_1}{\Lambda_1}},\frac{e^{-1/3}}{48}\sqrt{\frac{\Lambda_1\mu_1}{\pi d_{4\pi}}(4\pi-1)}\min\left\{1,\frac{\la_2+\Lambda_1}{\Lambda_1}\right\},\right.\\
&\left.\,\,\,\,\,\,\,\sqrt{\frac{(4\pi-1)\Lambda_1\mu_2}{32\pi d_{4\pi}}}\min\left\{1,\frac{\la_1+\Lambda_1}{\Lambda_1}\right\},\sqrt{\frac{(4\pi-1)\Lambda_1\mu_1}{32\pi d_{4\pi}}}\min\left\{1,\frac{\la_2+\Lambda_1}{\Lambda_1}\right\}\right\}.
\end{align*}

\end{lemma}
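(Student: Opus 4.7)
The strategy is to estimate each of $\beta_1,\beta_2,\beta_3,\beta_4$ separately in terms of $\la_i,\mu_i,\Lambda_1$ and $d_{4\pi}$, and then take the minimum. The seven terms in the target minimum appear because for each $\beta_i$ there are two or three non-comparable lower bounds, each sharper in a different parameter regime, obtained by combining Poincaré's inequality, the Nehari identity $\mu_i\int_\Omega u_i^2(e^{u_i^2}-1)\,\ud x = \int_\Omega(|\na u_i|^2+\la_i u_i^2)\,\ud x$, Cauchy--Schwarz, the pointwise bounds $s^2\le e^{s^2}-1$ and Lemma~\ref{bd1}(iii), the energy cap $E_i<2\pi$, and the Moser-type inequalities in Lemmas~\ref{at}--\ref{atn}.

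A preliminary step produces a lower bound on $\mathcal{S}_4$ in terms of $\Lambda_1$ and $d_{4\pi}$. Apply Lemma~\ref{at} with $\gamma \in (0,4\pi)$ to any $u \in H^1_0(\Omega)\subset H^1(\R^2)$ with $\|\na u\|_2 \le 1$, and exploit the Taylor bound $e^{\gamma u^2}-1 \ge \gamma u^2 + \gamma^2 u^4/2$ together with Poincaré to obtain
\[
\|u\|_4^4 \le \frac{2(d_{4\pi}-4\pi+\gamma)}{\gamma(4\pi-\gamma)\Lambda_1}\|\na u\|_2^2.
\]
Homogenizing in $\|\na u\|_2$ and choosing $\gamma=1$ (together with the trivial $d_{4\pi}-4\pi+1<d_{4\pi}$) give $\mathcal{S}_4 > \sqrt{(4\pi-1)\Lambda_1/(2 d_{4\pi})}$. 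Plugging this into \eqref{eq:beta34}, using $E_1+E_2<4\pi$ and $\min\{1/2,(\la_i+\Lambda_1)/(2\Lambda_1)\}=\tfrac{1}{2}\min\{1,(\la_i+\Lambda_1)/\Lambda_1\}$, yields at once the two terms of the stated minimum of the form $\sqrt{(4\pi-1)\Lambda_1\mu_j/(32\pi d_{4\pi})}\,\min\{1,(\la_i+\Lambda_1)/\Lambda_1\}$ (one for $\beta_3$, one for $\beta_4$).

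For $\beta_1$ (and symmetrically $\beta_2$), the Nehari identity rewrites
\[
\beta_1 = \frac{\int_\Omega(|\na u_1|^2+\la_1 u_1^2)\,\ud x}{\int_\Omega u_1^2 u_2^2\,\ud x}.
\]
The numerator is bounded below by $\min\{1,(\la_1+\Lambda_1)/\Lambda_1\}\|\na u_1\|_2^2$ via Poincaré. For the denominator, Cauchy--Schwarz gives $\int u_1^2 u_2^2 \le \|u_1\|_4^2\|u_2\|_4^2$; each factor is controlled by chaining $u_i^4 \le u_i^2(e^{u_i^2}-1)$ with Nehari and with the energy identity $4E_i = 2\mu_i\int u_i^2(e^{u_i^2}-1)\,\ud x - 2\mu_i\int(e^{u_i^2}-1-u_i^2)\,\ud x$ combined with Lemma~\ref{bd1}(iii), which together give $\|u_i\|_4^4 \le 4E_i/\mu_i$. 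Invoking also $E_j<2\pi$ produces the $\sqrt{\mu_1\mu_2/32}$-flavoured estimates (terms 1, 2, and 4 of the stated minimum, the $(\la_i+\Lambda_1)/\Lambda_1$ factors being picked up from the Poincaré step applied to the numerator). The sharper $d_{4\pi}$-dependent lower bounds (terms 3 and 5) are obtained by replacing, for one of the two Cauchy--Schwarz factors, the Nehari-based estimate $\|u_j\|_4^2 \le 2\sqrt{E_j/\mu_j}$ by the Sobolev-type bound $\|u_i\|_4^2 \le \|\na u_i\|_2^2/\mathcal{S}_4$ coming from the preliminary step; the specific constant $e^{-1/3}/48$ emerges by invoking Lemma~\ref{atn} in place of Lemma~\ref{at} at an optimized $\gamma$ (so that $e^{\gamma/(4\pi)}$ hits the value $e^{1/3}$).

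The main obstacle is purely bookkeeping: every ingredient is already present in the preceding lemmas, but each of the seven algebraic forms in the target minimum must be realized by a definite combination of tools---a choice of $\gamma$ in Lemma~\ref{at} or \ref{atn}, and the asymmetric splitting of Cauchy--Schwarz between $u_1$ and $u_2$---and one must verify that each $\beta_i$ is dominated below by at least one of the seven resulting estimates, so that their minimum is a genuine lower bound for $\beta^\ast$.
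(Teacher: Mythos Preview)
Your overall strategy---estimate each $\beta_i$ separately using the $\mathcal{S}_4$ bound, Nehari, Cauchy--Schwarz and the energy cap $E_i<2\pi$---is exactly the paper's, and your treatment of $\mathcal{S}_4$ and of $\beta_3,\beta_4$ (terms~6 and~7) is correct and matches the paper's argument.

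The gap is in your handling of $\beta_1,\beta_2$ (terms~1--5). After the Poincar\'e step you arrive at
\[
\beta_1 \ge \frac{\min\{1,(\la_1+\Lambda_1)/\Lambda_1\}\,\|\na u_1\|_2^2}{\|u_1\|_4^2\,\|u_2\|_4^2},
\]
and you bound the denominator by $4\sqrt{E_1E_2/(\mu_1\mu_2)}$. But this leaves $\|\na u_1\|_2^2$ (equivalently $E_1$) \emph{unbounded from below}; invoking only $E_2<2\pi$ cannot produce the $\sqrt{\mu_1\mu_2/32}$ forms. What the paper actually does here is prove an intermediate lower bound on $E_1$ via a \emph{dichotomy}: either $E_1>\frac{\pi}{4}\min\{1,(\la_1+\Lambda_1)/\Lambda_1\}$ outright, or else (from $E_1>\frac14\min\{\ldots\}\|\na u_1\|_2^2$) one has $\|\na u_1\|_2^2<\pi$, which permits applying Lemma~\ref{atn} with $\gamma=\pi$ to the normalized function and yields $E_1\ge \frac{e^{-2/3}}{36\mu_1}\big[\min\{\tfrac12,\tfrac{\la_1+\Lambda_1}{2\Lambda_1}\}\mathcal{S}_4\big]^2$. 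Feeding these three branches for $E_1$ into $\beta_1>\sqrt{\mu_1\mu_2E_1/(4E_2)}$ together with $E_2<2\pi$ is what produces terms~1,~2,~3 (and symmetrically terms~1,~4,~5 for $\beta_2$). This dichotomy is a genuine idea, not bookkeeping; your sketch never supplies the lower bound on $E_1$.

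A related inaccuracy: the constant $e^{-1/3}/48$ does not come from choosing $\gamma$ so that $e^{\gamma/(4\pi)}=e^{1/3}$. It comes from the second branch of $C(\gamma)$ in Lemma~\ref{atn} at $\gamma=\pi$, namely $\frac{16\pi(4\pi+\gamma)}{(4\pi-\gamma)^2}e^{2\gamma/(4\pi-\gamma)}=\frac{80}{9}e^{2/3}$, which the paper then relaxes to $36e^{2/3}$; taking square roots later gives $e^{-1/3}/6$, and the remaining factor $1/8$ arises from $E_2<2\pi$ and the $\mathcal{S}_4$ estimate. So your parenthetical identification of the mechanism is off.
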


\begin{remark}In particular, in the case of $\Omega=B(0,r)$, it is well known that $\Lambda_1(r)=\Lambda_1(B(0,r))\rg\iy$, as $r\rg0$. In this case for any fixed $\la_1,\la_2,\mu_1,\mu_2$, one has
$$
\beta^\ast>\min\left\{\sqrt{\frac{\mu_1\mu_2}{32}},\sqrt{\frac{\mu_1\mu_2}{32}\frac{\la_1+\Lambda_1}{\Lambda_1}},
\sqrt{\frac{\mu_1\mu_2}{32}\frac{\la_2+\Lambda_1}{\Lambda_1}}\right\}.
$$
\end{remark}

\begin{proof}[Proof of Lemma \ref{lemma:estimatebeta*}]

\noindent We \textbf{claim}:
\begin{equation}\label{est_s4}
\mathcal{S}_4>\sqrt{\frac{(4\pi-1)\Lambda_1}{2d_{4\pi}}},
\end{equation}
where we recall that
$$
d_{4\pi}=\sup_{\stackrel{u\in H^1(\R^2)}{\|\na u\|_2^2+\|u\|_2^2\le1}}\int_{\R^2}\left(e^{4\pi u^2}-1\right)\,\ud x.
$$
We also recall that in \cite{Ruf}, B. Ruf has proved that $d_{4\pi}<\iy$ and that it is attained, see also \cite{ishiwata}.

\noindent Let us prove the claim. In fact, there exists $\vp\in H_0^1(\Omega)$ such that $\|\na\vp\|_2=1$ and $\|\na\vp\|_2^2=\mathcal{S}_4\|\vp\|_4^2$. By Lemma \ref{at} with $\gamma=1$,
$$
\int_{\Omega}\left(e^{\vp^2}-1\right)\,\ud x\le\frac{d_{4\pi}}{4\pi-1}\|\vp\|_2^2.
$$
Noting that $x^2<2(e^x-1)$ for any $x>0$, then
$$
\int_{\Omega}\vp^4\,\ud x<2\int_{\Omega}\left(e^{\vp^2}-1\right)\,\ud x\le\frac{2d_{4\pi}}{4\pi-1}\|\vp\|_2^2.
$$
Since $1=\|\na\vp\|_2^2\ge\Lambda_1\|\vp\|_2^2$, we have $\mathcal{S}_4^{-2}=\|\vp\|_4^4<2d_{4\pi}(4\pi-1)^{-1}\Lambda_1^{-1}$ and the claim is proved.
\br It is a long standing and essentially open problem determining best constants in subcritical Sobolev inequalities, namely for the embedding $H_0^1\hookrightarrow L^p$ as $2<p<2^*$. We address this issue by establishing estimates in the spirit of \eqref{est_s4} to the general $L^p$ case in \cite{DJ18}.
\er
\noindent Recall the definitions of $J_{\lambda_i,\lambda_i}$, $u_i:=u_{\lambda_i,\mu_i}$ and $E_i:=E_{\lambda_i,\mu_i}$ provided in Subsection \ref{subsec:limitproblem}. Next we estimate minimal energies $E_i$ with respect to $\lambda_i$, $\mu_i$ and $\mathcal{S}_4$, namely we \textbf{claim}:
\begin{align}
E_1\ge\min\left\{\frac{\pi}{4},\frac{\pi}{4}\frac{\la_1+\Lambda_1}{\Lambda_1},\frac{e^{-2/3}}{36\mu_1}\left[\min\left\{\frac{1}{2},\frac{\lambda_1+\Lambda_1}{2\Lambda_1}\right\}\mathcal{S}_4\right]^2\right\},\label{eq:E1}\\
E_2\ge\min\left\{\frac{\pi}{4},\frac{\pi}{4}\frac{\la_2+\Lambda_1}{\Lambda_1},\frac{e^{-2/3}}{36\mu_2}\left[\min\left\{\frac{1}{2},\frac{\lambda_2+\Lambda_1}{2\Lambda_1}\right\}\mathcal{S}_4\right]^2\right\}.\label{eq:E2}
\end{align}
Indeed, if $E_1\le\frac{\pi}{4}\min\left\{1,\frac{\lambda_1+\Lambda_1}{\Lambda_1}\right\}$, then by using Lemma \ref{bd1}-(iii), $\lambda>-\Lambda_1$, and Poincar\'e's inequality,
\begin{align*}\lab{wo}
E_1&=J_{\lambda_1,\mu_1}(u_1)-\frac{1}{4}\lan J_{\lambda_1,\mu_1}'(u_1),u_1\ran\nonumber\\
&=\frac{1}{4}\int_{\Omega}\left(|\na u_1|^2+\la_1u_1^2\right)\,\ud x+\frac{\mu_1}{4}\int_{\Omega}\left[u_1^2\left(e^{u_1^2}-1\right)-2\left(e^{u_1^2}-1-u_1^2\right)\right]\,\ud x\\
&> \frac{1}{4}\int_{\Omega}\left(|\na u_1|^2+\la_1u_1^2\right)\,\ud x\geq \frac{1}{4}\min\left\{1,\frac{\lambda_1+\Lambda_1}{\Lambda_1}\right\}\int_{\Omega}|\na u_1|^2\,\ud x\ .\nonumber
\end{align*}

Then $\|\na u_1\|_2^2<\max\{1,\frac{\Lambda_1}{\lambda_1+\Lambda_1}\}4E_1\le\pi$. By Lemma \ref{atn} with $\gamma=\pi$,
$$
\int_{\Omega}u_1^2\left(e^{u_1^2}-1\right)\,\ud x\le\frac{C(\pi)}{\pi}\|u_1\|_4^4 =\frac{320}{9}e^{2/3} \|u_1\|_4^4<36e^{2/3}\|u_1\|_4^4,
$$
and
$$
\frac{\int_{\Omega}|\na u_1|^2\,\ud x}{\left[\int_{\Omega}u_1^2\left(e^{u_1^2}-1\right)\,\ud x\right]^{1/2}}\ge\frac{e^{-1/3}}{6}\frac{\|\na u_1\|_2^2}{\|u_1\|_4^2}\ge\frac{e^{-1/3}}{6}\mathcal{S}_4.
$$
Then
\begin{align*}
&\mu_1\left[\int_{\Omega}u_1^2\left(e^{u_1^2}-1\right)\,\ud x\right]^{1/2}=\frac{\int_{\Omega}\left(|\na u_1|^2+\la_1u_1^2\right)\,\ud x}{\left[\int_{\Omega}u_1^2\left(e^{u_1^2}-1\right)\,\ud x\right]^{1/2}}\\
&\ge\min\left\{1,\frac{\lambda_1+\Lambda_1}{\Lambda_1}\right\}\frac{\int_{\Omega}|\na u_1|^2\,\ud x}{\left[\int_{\Omega}u_1^2\left(e^{u_1^2}-1\right)\,\ud x\right]^{1/2}}\ge\min\left\{1,\frac{\lambda_1+\Lambda_1}{\Lambda_1}\right\}\frac{e^{-1/3}}{6}\mathcal{S}_4,
\end{align*}
that is,
$$
\int_{\Omega}\left(|\na u_1|^2+\la_1u_1^2\right)\,\ud x=\mu_1\int_{\Omega}u_1^2\left(e^{u_1^2}-1\right)\,\ud x\ge\frac{e^{-2/3}}{36\mu_1}\left[\min\left\{1,\frac{\lambda_1+\Lambda_1}{\Lambda_1}\right\}\mathcal{S}_4\right]^2.
$$
Thus,
$$
E_1\ge\frac{1}{4}\int_{\Omega}\left(|\na u_1|^2+\la_1u_1^2\right)\,\ud x\ge\frac{e^{-2/3}}{36\mu_1}\left[\min\left\{\frac{1}{2},\frac{\lambda_1+\Lambda_1}{2\Lambda_1}\right\}\mathcal{S}_4\right]^2.
$$
Similarly, if $E_2\le\frac{\pi}{4}\min\{1,\frac{\lambda_2+\Lambda_1}{\Lambda_1}\}$, we have
$$
E_2\ge\frac{e^{-2/3}}{36\mu_2}\left[\min\left\{\frac{1}{2},\frac{\lambda_2+\Lambda_1}{2\Lambda_1}\right\}\mathcal{S}_4\right]^2.
$$
This proves the claims \eqref{eq:E1}--\eqref{eq:E2}.

\smallbreak

 Let us next estimate $\beta_i$. We \textbf{claim}:
\begin{equation}\label{eq:beta_1}
\beta_1>\min\left\{\sqrt{\frac{\mu_1\mu_2}{32}},\sqrt{\frac{\mu_1\mu_2}{32}\frac{\la_1+\Lambda_1}{\Lambda_1}},\frac{e^{-1/3}}{48}\sqrt{\frac{\Lambda_1\mu_2}{\pi d_{4\pi}}(4\pi-1)}\min\left\{1,\frac{\la_1+\Lambda_1}{\Lambda_1}\right\}\right\},
\end{equation}
and
\begin{equation}\label{eq:beta_2}
\beta_2>\min\left\{\sqrt{\frac{\mu_1\mu_2}{32}},\sqrt{\frac{\mu_1\mu_2}{32}\frac{\la_2+\Lambda_1}{\Lambda_1}},\frac{e^{-1/3}}{48}\sqrt{\frac{\Lambda_1\mu_1}{\pi d_{4\pi}}(4\pi-1)}\min\left\{1,\frac{\la_2+\Lambda_1}{\Lambda_1}\right\}\right\}.
\end{equation}
In fact, we observe that
\[
\begin{cases}
\frac{\mu_1}{4}\int_{\Omega}u_1^2\left(e^{u_1^2}-1\right)\,\ud x<E_1<\frac{\mu_1}{2}\int_{\Omega}u_1^2\left(e^{u_1^2}-1\right)\,\ud x,\\
\frac{\mu_2}{4}\int_{\Omega}u_2^2\left(e^{u_2^2}-1\right)\,\ud x<E_2<\frac{\mu_2}{2}\int_{\Omega}u_2^2\left(e^{u_2^2}-1\right)\,\ud x.
\end{cases}
\]
Then
\begin{align*}
&\int_{\Omega}u_1^2u_2^2\,\ud x\le\left[\int_{\Omega}u_1^4\,\ud x\right]^{1/2}\left[\int_{\Omega}u_2^4\,\ud x\right]^{1/2}\\
&\le\left[\int_{\Omega}u_1^2\left(e^{u_1^2}-1\right)\,\ud x\right]^{1/2}\left[\int_{\Omega}u_2^2\left(e^{u_2^2}-1\right)\,\ud x\right]^{1/2}\\
&\le4\sqrt{\frac{E_2}{E_1\mu_1\mu_2}}E_1<2\mu_1\sqrt{\frac{E_2}{E_1\mu_1\mu_2}}\int_{\Omega}u_1^2\left(e^{u_1^2}-1\right)\,\ud x,
\end{align*}
which implies
$$
\beta_1>\sqrt{\frac{\mu_1\mu_2E_1}{4E_2}}.
$$
Similarly,
$$
\beta_2>\sqrt{\frac{\mu_1\mu_2E_2}{4E_1}}.
$$
By \eqref{est_s4}, \eqref{eq:E1}, \eqref{eq:E2} and $E_i\in (0,2\pi)$, we get the desired estimates \eqref{eq:beta_1}-\eqref{eq:beta_2}.

\smallbreak

 Finally, thanks to $E_1, E_2\in(0,2\pi)$ and again by \eqref{est_s4}, \eqref{eq:E1}, \eqref{eq:E2} we have
$$
\beta_3>\sqrt{\frac{(4\pi-1)\Lambda_1\mu_2}{32\pi d_{4\pi}}}\min\left\{1,\frac{\la_1+\Lambda_1}{\Lambda_1}\right\},\qquad
\beta_4>\sqrt{\frac{(4\pi-1)\Lambda_1\mu_1}{32\pi d_{4\pi}}}\min\left\{1,\frac{\la_2+\Lambda_1}{\Lambda_1}\right\}.
$$
Since $\sqrt{\frac{\mu_1\mu_2}{32}\frac{\la_1+\Lambda_1}{\Lambda_1}},
\sqrt{\frac{\mu_1\mu_2}{32}\frac{\la_2+\Lambda_1}{\Lambda_1}}$ remain bounded as $r\rg0$, the proof is concluded.
\end{proof}

\s{The case $\beta>0$ small (weak cooperation): proof of Theorem \ref{Th1}.}\label{sec3}

\subsection{Nehari Manifold}
\noindent We introduce the following Nehari-type set
$$
\mathcal{M}_{\beta}:=
\left\{
(u,v)\in X, u\not\equiv0, v\not\equiv0
\left|
\begin{array}{l}
\int_{\Omega}(|\na u|^2+\la_1u^2)\,\ud x=\int_{\Omega}uH_u(u,v)\,\ud x,\smallbreak\\
\int_{\Omega}(|\na v|^2+\la_2v^2)\,\ud x=\int_{\Omega}vH_v(u,v)\,\ud x,
\end{array}
\right.
\right\}
$$
and the least energy level
$$
c_\beta:=\inf_{(u,v)\in\mathcal{M}_{\beta}}I(u,v).
$$
\bo
We have $\mathcal{M}_{\beta}\not=\emptyset$ for any $\beta\in\R$.
\eo
\bp
In fact, for any $\vp,\phi\in C_0^\iy(\R^2)\setminus\{0\}$ with disjoint supports, there exist $t_0,s_0>0$ such that $(\sqrt{t_0}\vp,\sqrt{s_0}\phi)\in \mathcal{M}_{\beta}$. To show this, it is enough to prove the existence and uniqueness of a positive solution $(t_0,s_0)$ to the system
\[
\begin{cases}
\int_{\Omega}(|\na \vp|^2+\la_1\vp^2)\,\ud x=\mu_1\int_{\Omega}\vp^2(e^{t\vp^2}-1)\,\ud x,\\
\int_{\Omega}(|\na \phi|^2+\la_2\phi^2)\,\ud x=\mu_2\int_{\Omega}\phi^2(e^{s\phi^2}-1)\,\ud x.
\end{cases}
\]
Let $\g(t)=\mu_1\int_{\Omega}\vp^2(e^{t\vp^2}-1)\,\ud x$. Then $\g(t)\in C([0,\iy))$ and $\g(t)>0$ for all $t>0$. Clearly, $\g(0)=0$ and $\g(t)\ge\mu_1t\int_{\Omega}\vp^4\,\ud x\rg\iy$, as $t\rg\iy$. Since $\la_1>-\Lambda_1$, we have $\int_{\Omega}(|\na \vp|^2+\la_1\vp^2)\,\ud x>0$, which yields the existence of $t_0$ by the mean value theorem. The uniqueness is just a consequence of the monotonicity of $e^x$, which implies the strict monotonicity of $\gamma$. The existence and uniqueness of $s_0$ can be obtained in a similar fashion.
\ep
\bo\lab{Nehari}
If $\beta\in(0,\sqrt{\mu_1\mu_2})$, then $\mathcal{M}_{\beta}$ is a $C^1$-manifold of codimension $2$. Moreover, it is a natural constraint, namely, $(u,v)\in E$ is a critical point of $I$ with nontrivial components if and only if $(u,v)\in\mathcal{M}_{\beta}$ is a critical point of $I\left|_{\mathcal{M}_{\beta}}\right.$.
\eo
\bp
For any $(u,v)\in X$, let
\begin{align*}
&G_1(u,v)=\int_{\Omega}(|\na u|^2+\la_1u^2-uH_u(u,v))\,\ud x,\ G_2(u,v)=\int_{\Omega}(|\na v|^2+\la_2v^2-vH_v(u,v))\,\ud x;
\end{align*}
then one can easily check that $G_1, G_2$ are of $C^1$-class on $X$ and that the following hold
\begin{align*}
&\lan \na G_1(u,v),(\vp,\phi)\ran=\int_{\Omega}\left[ 2(\na u\na \vp+\la_1 u\vp)-\vp H_u(u,v)-u\vp H_{uu}(u,v)-u\phi H_{uv}(u,v)\right]\, \ud x,\\
&\lan \na G_2(u,v),(\vp,\phi)\ran=\int_{\Omega}\left[ 2(\na v\na \phi+\la_2 v\phi)-\phi H_v(u,v)-v\phi H_{vv}(u,v)-v\vp H_{uv}(u,v)\right]\, \ud x,
\end{align*}
where $(\vp,\phi)\in X$. Moreover,
$$
\mathcal{M}_{\beta}:=\{(u,v)\in X, u\not\equiv0, v\not\equiv0: G_1(u,v)=G_2(u,v)=0\}.
$$

{\bf Step 1.} $\mathcal{M}_{\beta}$ is a $C^1$-manifold of codimension $2$. It is enough to prove that $\na G_1(u,v)$ and $\na G_2(u,v)$ are linearly independent for any $(u,v)\in \mathcal{M}_{\beta}$. Indeed, if there exist $\al_1,\al_2\in\R$ and $(u,v)\in \mathcal{M}_{\beta}$ such that $\al_1 \na G_1(u,v)+\al_2 \na G_2(u,v)=0$ in $X^\ast$, then $\lan\al_1 \na G_1(u,v)+\al_2 \na G_2(u,v),(\vp,\phi)\ran=0$ for any $(\phi,\vp)\in \mathcal{M}_{\beta}$. In particular, $\lan\al_1 \na G_1(u,v)+\al_2 \na G_2(u,v),(u,0)\ran=0$ and $\lan\al_1 \na G_1(u,v)+\al_2 \na G_2(u,v),(0,v)\ran=0$. Noting that $G_1(u,v)=G_2(u,v)=0$, we get
\be\lab{fc}
\begin{cases}
\al_1\int_{\Omega}[u^2H_{uu}(u,v)-uH_u(u,v)]\,\ud x+\al_2\int_{\Omega}uvH_{uv}(u,v)\,\ud x=0,\\
\al_1\int_{\Omega}uvH_{uv}(u,v)\,\ud x+\al_2\int_{\Omega}[v^2H_{vv}(u,v)-vH_v(u,v)]\,\ud x=0,
\end{cases}
\ee
Observe that
\begin{align*}
u^2H_{uu}(u,v)-uH_u(u,v)&=2\mu_1u^4e^{u^2}+\beta(u^2G_{uu}(u,v)-uG_u(u,v))\\
&=2\mu_1u^4e^{u^2}+\beta[|uv|^2e^{|uv|}-|uv|(e^{|uv|}-1)],
\end{align*}
\begin{align*}
v^2H_{vv}(u,v)-vH_v(u,v)&=2\mu_2v^4e^{v^2}+\beta(v^2G_{vv}(u,v)-vG_v(u,v))\\
&=2\mu_2v^4e^{v^2}+\beta[|uv|^2e^{|uv|}-|uv|(e^{|uv|}-1)],
\end{align*}
and
$$
uvH_{uv}(u,v)= \beta\left(|uv|^2e^{|uv|}+|uv|(e^{|uv|}-1)\right).
$$
Denote by $J$ the matrix
\begin{align*}
\left[
\begin{array}{cc}
\int_{\Omega}[u^2H_{uu}(u,v)-uH_u(u,v)]\,\ud x & \int_{\Omega}uvH_{uv}(u,v)\,\ud x \\
\int_{\Omega}uvH_{uv}(u,v)\,\ud x & \int_{\Omega}[v^2H_{vv}(u,v)-vH_v(u,v)]\,\ud x
\end{array}
\right]\ ,
\end{align*}
then
\begin{align*}
\mbox{det}(J)&=4\mu_1\mu_2\int_{\Omega}u^4e^{u^2}\,\ud x\int_{\Omega}v^4e^{v^2}\,\ud x-4\beta^2\int_{\Omega}|uv|^2e^{|uv|}\,\ud x\int_{\Omega}|uv|(e^{|uv|}-1)\,\ud x\\
&\,\,\,\,\,\,\,\,+2\beta\int_{\Omega}(\mu_1u^4e^{u^2}+\mu_2v^4e^{v^2})\,\ud x\int_{\Omega}\left[|uv|^2e^{|uv|}-|uv|(e^{|uv|}-1)\right]\,\ud x.
\end{align*}
Noting that $\beta>0$ and $xe^x\ge e^x-1$ for any $x\ge0$, we have
$$
\mbox{det}(J)\ge4\mu_1\mu_2\int_{\Omega}u^4e^{u^2}\,\ud x\int_{\Omega}v^4e^{v^2}\,\ud x-4\beta^2\int_{\Omega}|uv|^2e^{|uv|}\,\ud x\int_{\Omega}|uv|(e^{|uv|}-1)\,\ud x.
$$
By Lemma \ref{bd1}-(i) and again since $xe^x\ge e^x-1$ for $x\ge 0$,
\begin{align*}
&\int_{\Omega}|uv|(e^{|uv|}-1)\,\ud x\le\int_{\Omega}|uv|(e^{u^2}-1)^{1/2}(e^{v^2}-1)^{1/2}\,\ud x\\
&\le\left[\int_{\Omega}u^2(e^{u^2}-1)\,\ud x\right]^{1/2}\left[\int_{\Omega}v^2(e^{v^2}-1)\,\ud x\right]^{1/2}\\
&\le\left[\int_{\Omega}u^4e^{u^2}\,\ud x\right]^{1/2}\left[\int_{\Omega}v^4e^{v^2}\,\ud x\right]^{1/2}.
\end{align*}
At the same time we have
$$
\int_{\Omega}|uv|^2e^{|uv|}\,\ud x\le\int_{\Omega}|uv|^2e^{\frac{u^2+v^2}{2}}\,\ud x\le\left[\int_{\Omega}u^4e^{u^2}\,\ud x\right]^{1/2}\left[\int_{\Omega}v^4e^{v^2}\,\ud x\right]^{1/2}\ ,
$$
so that
$$
\mbox{det}(J)\ge4(\mu_1\mu_2-\beta^2)\int_{\Omega}u^4e^{u^2}\,\ud x\int_{\Omega}v^4e^{v^2}\,\ud x>0,\,\mbox{if}\,\, \beta^2<\mu_1\mu_2.
$$
It follows that system \re{fc} only admits one trivial solution. Namely, $\al_1=\al_2=0$.
\vskip0.1in
{\bf Step 2.} $\mathcal{M}_{\beta}$ is a natural constraint. Suppose $(u,v)\in\mathcal{M}_{\beta}$ is a critical point of $I\left|_{\mathcal{M}_{\beta}}\right.$, then there exist two Lagrange multipliers $\al_3,\al_4\in\R$ such that
\be\lab{lmz}
\na I(u,v)+\al_1 \na G_1(u,v)+\al_2 \na G_2(u,v)=0\,\,\,\mbox{in}\,\,X^\ast.
\ee
Taking the test functions $(u,0)$ and $(0,v)$ respectively in \re{lmz} and noting that $G_1(u,v)=G_2(u,v)=0$, we get $J\cdot(\al_1,\al_2)^T=(0,0)^T$.
By Step 1, $\mbox{det}(J)\not=0$ if $\beta\in(0,\sqrt{\mu_1\mu_2})$. Thus, $\al_1=\al_2=0$. That is $\na I(u,v)=0$ in $X^\ast$.
\ep

\subsection{Estimate of the level $c_\beta$}

We first investigate the relationship between the quantities $\int_{\Omega}u^2\left(e^{u^2}-1\right)\,\ud x$ and $\|\na u\|_2$.
\bl\lab{imbed}
Assume that $\{u_n\}\subset H_0^1(\Omega)\setminus\{0\}$ is such that $\|\na u_n\|_2\rg0$, as $n\rg\iy$. Then
$$
\liminf_{n\rg\iy}\frac{\int_{\Omega}|\na u_n|^2\,\ud x}{\left[\int_{\Omega}u_n^2\left(e^{u_n^2}-1\right)\,\ud x\right]^{\frac12}}\ge\mathcal{S}_4.
$$
\el
\bp
Let $$\Omega_n:=\{x\in\Omega:|u_n(x)|\ge\|\na u_n\|_2^{1/4}\}.$$ Then
\begin{align*}
&\int_{\Omega}u_n^2\left(e^{u_n^2}-1\right)\,\ud x=\int_{\Omega_n}u_n^2\left(e^{u_n^2}-1\right)\,\ud x+\int_{\Omega\setminus\Omega_n}u_n^2\left(e^{u_n^2}-1\right)\,\ud x\\
&\le\|\na u_n\|_2^{-1/2}\int_{\Omega_n}u_n^4\left(e^{u_n^2}-1\right)\,\ud x+\int_{\Omega\setminus\Omega_n}u_n^4\,\ud x \cdot \max_{0<x\le\|\na u_n\|_2^{1/2}}\frac{e^x-1}{x}\\
&\le\|\na u_n\|_2^{-1/2}\int_{\Omega}u_n^4\left(e^{u_n^2}-1\right)\,\ud x+e^{\|\na u_n\|_2^{1/2}}\int_{\Omega}u_n^4\,\ud x.
\end{align*}
By Lemma \ref{at} and using also the fact that $(e^x-1)^2\leq 2^{2x}-1$ for all $x\geq 0$, we have, for sufficient large $n$,
\begin{align*}
\int_{\Omega}u_n^4\left(e^{u_n^2}-1\right)\,\ud x&\le\left[\int_{\Omega}u_n^8\,\ud x\right]^{1/2}\left[\int_{\Omega}\left(e^{u_n^2}-1\right)^2\,\ud x\right]^{1/2}\\
&\le\left[\int_{\Omega}u_n^8\,\ud x\right]^{1/2}\left[\int_{\Omega}\left(e^{2u_n^2}-1\right)\,\ud x\right]^{1/2}\\
&\le\left(\frac{d_{4\pi}}{2\pi-1}\right)^{1/2}\|u_n\|_8^4\|u_n\|_2.
\end{align*}
Since $H_0^1(\Omega)\hookrightarrow L^p(\Omega)$ for any $p\geq 1$, there exists $c>0$ (independent of $n$) such that
$$
\int_{\Omega}u_n^2\left(e^{u_n^2}-1\right)\,\ud x\le \left(c\|\na u_n\|_2^{1/2}+e^{\|\na u_n\|_2^{1/2}}\mathcal{S}_4^{-2}\right)\|\na u_n\|_2^4.
$$
This yields the desired result, since $\|\na u_n\|_2\rg0$, as $n\rg\iy$.
\ep

\bl\lab{lower} The level $c_\beta$ is strictly positive for any $\beta>0$. Moreover, there exists $\rho>0$ such that $\liminf_{\beta\rg0}c_\beta\ge\rho$.
\el
\bp
{\bf Step 1.} Let us first check that $c_\beta\geq 0$. Observe that, for any $(u,v)\in\mathcal{M}_{\beta}$ and $p\ge2$,
\begin{align}\lab{am}
I(u,v)&=I(u,v)-\frac{1}{p}\lan I'(u,v),(u,v)\ran\nonumber\\
&=\frac{p-2}{2p}\int_{\Omega}(|\na u|^2+|\na v|^2+\la_1u^2+\la_2v^2)\,\ud x+K_p(u,v),
\end{align}
where
\begin{align*}
K_p(u,v)=&\frac{\mu_1}{p}\int_{\Omega}\left[u^2(e^{u^2}-1)-\frac{p}{2}(e^{u^2}-1-u^2)\right]\,\ud x\\
&+\frac{2\beta}{p}\int_{\Omega}\left[|uv|(e^{|uv|}-1)-\frac{p}{2}(e^{|uv|}-1-|uv|)\right]\,\ud x\\
&+\frac{\mu_2}{p}\int_{\Omega}\left[v^2(e^{v^2}-1)-\frac{p}{2}(e^{v^2}-1-v^2)\right]\,\ud x.
\end{align*}
By Lemma \ref{bd1}-(iii), $K_p(u,v)>0$ if $p\in[2,4]$. Moreover, since $\la_1,\la_2>-\Lambda_1$, then for $i=1,2$,
$$
\int_{\Omega}(|\na u|^2+\la_iu^2)\,\ud x\ge  \min\left\{ 1, \frac{\la_i+\Lambda_1}{\Lambda_1} \right\}\int_{\Omega}|\na u|^2\,\ud x,\,\,\mbox{for any}\,\,u\in H_0^1(\Omega).
$$
So $I(u,v)>0$ for any $(u,v)\in\mathcal{M}_{\beta}$. It follows that $c_\beta\ge0$.
\vskip0.1in
{\bf Step 2.} We next show that $c_\beta>0$. Indeed if not, assuming $c_\beta=0$ we have there exists $(u_n,v_n)\in\mathcal{M}_{\beta}$ such that $I(u_n,v_n)\rg0$, as $n\rg\iy$. Taking $p=4$ in \re{am}, we have $\|\na u_n\|_2\rg0$ and $\|\na v_n\|_2\rg0$, as $n\rg\iy$. By Lemma \ref{imbed}, for any $\alpha\in (0,S_4)$
\begin{align}\lab{guji1}
\alpha\left[\int_{\Omega}u_n^2\left(e^{u_n^2}-1\right)\,\ud x\right]^{\frac12}\le\int_{\Omega}|\na u_n|^2\,\ud x,\qquad
\alpha\left[\int_{\Omega}v_n^2\left(e^{v_n^2}-1\right)\,\ud x\right]^{\frac12}\le\int_{\Omega}|\na v_n|^2\,\ud x,
\end{align}
if $n$ is large enough. In particular, as $n\rg\iy$ we have
\be\lab{van}
\int_{\Omega}u_n^2\left(e^{u_n^2}-1\right),\int_{\Omega}v_n^2\left(e^{v_n^2}-1\right)\,\ud x\rg0.
\ee
Since $(u_n,v_n)\in\mathcal{M}_{\beta}$, we get
\be\lab{guji2}
\int_{\Omega}(|\na u_n|^2+\la_1u_n^2)\,\ud x=\int_{\Omega}\left[\mu_1u_n^2(e^{u_n^2}-1)+\beta|u_nv_n|(e^{|u_nv_n|}-1)\right]\,\ud x,
\ee
and, since $\la_1>-\Lambda_1$, also that
\be\lab{guji3}
\int_{\Omega}(|\na u_n|^2+\la_1u_n^2)\,\ud x\ge \min\left\{1,\frac{\la_1+\Lambda_1}{\Lambda_1} \right\} \int_{\Omega}|\na u_n|^2\,\ud x.
\ee
Meanwhile, by Lemma \ref{bd1}-(i) and H\"older's inequality,
$$
\int_{\Omega}|u_nv_n|(e^{|u_nv_n|}-1)\,\ud x\le\left[\int_{\Omega}u_n^2\left(e^{u_n^2}-1\right)\,\ud x\right]^{\frac12}\left[\int_{\Omega}v_n^2\left(e^{v_n^2}-1\right)\,\ud x\right]^{\frac12}.
$$
Thus, by \re{guji1}, \re{guji2}, \re{guji3} and since $\beta>0$, for $n$ large enough we have that
\be\lab{guji4}
\alpha \min\left\{ 1,\frac{\la_1+\Lambda_1}{\Lambda_1} \right\}\le\mu_1\left[\int_{\Omega}u_n^2\left(e^{u_n^2}-1\right)\,\ud x\right]^{\frac12}+\beta\left[\int_{\Omega}v_n^2\left(e^{v_n^2}-1\right)\,\ud x\right]^{\frac12}
\ee
which contradicts \re{van}.
\vskip0.1in
{\bf Step 3.} Let us prove that $\liminf_{\beta\to 0^+} c_\beta>0$. If not, there exists $\{\beta_k\}\subset(0,\iy)$ such that $\beta_k\rg0$, as $k\rg\iy$ and $c_{\beta_k}\rg0$. Moreover, there exists $(u_k,v_k)\in\mathcal{M}_{\beta_k}$ such that $I(u_k,v_k)\rg0$, as $k\rg\iy$. Similarly to Step 2, we get a contradiction. This completes the proof of the lemma.
\ep

\noindent As a consequence of Lemma \ref{lower}, we know that $\mathcal{M}_\beta$ is bounded away from the origin.

\bc
Given $\beta>0$, we have
$$
\inf\left\{\|\na u\|_2+\|\na v\|_2: (u,v)\in\mathcal{M}_\beta\right\}>0.
$$
\ec
\noindent Now, in what follows, we establish an upper estimate for $c_\beta$, as long as $\beta$ is small. Recall the definitions of $\beta_1,\beta_2$ from \eqref{eq:beta12}, and recall also that $E_i$ denotes the least energy level of the single equation, see \eqref{eq:les:Ei}.
\bl\lab{upper}
If $0<\beta<\min\{\beta_1,\beta_2\}$, then $c_\beta<E_1+E_2$.
\el
\bp
Let $u_i=u_{\lambda_i,\mu_i}$ be a ground state (positive) solution associated to the level $E_i$, $i=1,2$. For any $t,s\ge0$, let $f(t,s)=I(\sqrt{t}u_1,\sqrt{s}u_2)$, that is,
\begin{align*}
f(t,s)=&\frac{1}{2}\int_{\Omega}\left(t|\na u_1|^2+t\la_1u_1^2+s|\na u_2|^2+s\la_2u_2^2\right)\,\ud x\\
&-\frac{1}{2}\int_{\Omega}\left[\mu_1\left(e^{tu_1^2}-1-tu_1^2\right)+\mu_2\left(e^{su_2^2}-1-su_2^2\right)\right.\\
&\,\,\,\,\,\,\,\,\,\,\,\,\,\,\,\,\,\,\,\,\,\left.+2\beta\left(e^{\sqrt{ts}u_1u_2}-1-\sqrt{ts}u_1u_2\right)\right]\,\ud x.
\end{align*}
Our first aim is to show that $f$ has a global maximum at a pair $(t_0,s_0)\in \R^2$, with $t_0,s_0>0$.

\noindent By combining Lemma \ref{bd1}-(iii) with Lemma \ref{atn} and by taking $t,s>0$ small enough so that $t\|\nabla u_1\|_2^2,\, s\|\nabla u_2\|_2^2<4\pi $, we conclude that, for some $\kappa>0$,
\begin{equation*}
\int_{\Omega}\left(e^{tu_1^2}-1-tu_1^2\right)\,\ud x \le  \frac{1}{2}\int_\Omega t u_1^2 (e^{t u_1^2}-1) \le \frac{C(1)}{2} \| \sqrt{t} u_1\|_4^4 =\kappa t^2 \|u_1\|_4^4
\end{equation*}
Analogously,
\begin{equation*}
\int_{\Omega}\left(e^{tu_2^2}-1-tu_2^2\right)\,\ud x   \le \kappa t^2 \|u_2\|_4^4
\end{equation*}
Combining the last two inequalities with Lemma \ref{bd1}-(ii), and using Cauchy-Schwarz's inequality,
$$
\int_{\Omega}\left(e^{\sqrt{ts}u_1u_2}-1-\sqrt{ts} u_1u_2\right)\,\ud x\le st\kappa\|u_1\|^2_4\|u_2\|^2_4.
$$
Then we have, since $\beta>0$ and $\lambda_1,\lambda_2>-\Lambda_1$,
\begin{align*}
f(t,s)&\ge\frac{t}{2}\left(\|\na u_1\|_2^2+\la_1\|u_1\|_2^2\right)+\frac{s}{2}\left(\|\na u_2\|_2^2+\la_2\|u_2\|_2^2\right)\\
&\,\,\,\,\,\,-\frac{t^2}{2}\kappa \mu_1\|u_1\|_4^4-\frac{s^2}{2}\kappa\mu_2\|u_2\|_4^4-\beta st\kappa\|u_1\|_4^2\|u_2\|_4^2\\
&>0,\,\,\mbox{for $s^2+t^2$ small enough.}
\end{align*}
On the other hand, since $e^x-1-x\ge x^2/2$ for any $x\geq0$,
\begin{align*}
f(t,s)&\le\frac{t}{2}\left(\|\na u_1\|_2^2+\la_1\|u_1\|_2^2\right)+\frac{s}{2}\left(\|\na u_2\|_2^2+\la_2\|u_2\|^2\right)\\
&\,\,\,\,\,\,-\frac{t^2}{4}\mu_1\|u_1\|_4^4-\frac{s^2}{4}\mu_2\|u_2\|_4^4-\frac{\beta}{2} st\|u_1\|_4^2\|u_2\|_4^2\\
&<0,\,\,\mbox{for $s^2+t^2$ large enough.}
\end{align*}
Clearly, $f\in C^1([0,\iy)\times[0,\iy))$. Then there exist $t_0,s_0\ge0$ such that
$$
f(t_0,s_0)=\max_{t,s\ge0}f(s,t)>0.
$$
We claim that $t_0,s_0>0$. If not, without loss of generality we assume that $t_0>0$ and $s_0=0$. So $\frac{\pl f}{\pl t}(t_0,0)=0$ and $\frac{\pl f}{\pl s}(t_0,0)\le0$. That is,
\be\lab{inter1}
\frac{\pl f}{\pl t}(t_0,0)=\frac{1}{2}\int_{\Omega}\left(|\na u_1|^2+\la_1u_1^2\right)\,\ud x-\frac{1}{2}\int_{\Omega}\mu_1u_1^2\left(e^{t_0u_1^2}-1\right)\,\ud x=0,
\ee
and
\be\lab{inter2}
\frac{\pl f}{\pl s}(t_0,0)=\frac{1}{2}\int_{\Omega}\left(|\na u_2|^2+\la_2u_2^2\right)\,\ud x-\frac{t_0}{2}\int_{\Omega}\beta u_1^2u_2^2\,\ud x\le0.
\ee
Recalling that $u_1$ is a positive solution of \re{lm1} for $i=1$, we get\
$$
\int_{\Omega}\left(|\na u_1|^2+\la_1u_1^2\right)\,\ud x=\int_{\Omega}\mu_1u_1^2\left(e^{u_1^2}-1\right)\,\ud x.
$$
It follows from \re{inter1} that $t_0=1$. Then \re{inter2} reduces to
$$
\int_{\Omega}\left(|\na u_2|^2+\la_2u_2^2\right)\,\ud x\le\beta \int_{\Omega}u_1^2u_2^2\,\ud x.
$$
On the other hand, since $u_2$ is a positive solution of \re{lm1} for $i=2$, we get
$$
\int_{\Omega}\mu_2u_2^2\left(e^{u_2^2}-1\right)\,\ud x   = \int_\Omega (|\nabla u_2|^2+\lambda_2 u_2^2)\, \ud x  \le\beta \int_{\Omega}u_1^2u_2^2\,\ud x.
$$
Thus $\beta\geq \beta_2$, which contradict the choice of $\beta$. Observe that if we assumed that $t_0=0$ and $s_0>0$, we would obtain $\beta\geq \beta_1$, again a contradiction.

\noindent Therefore $t_0, s_0>0$ and $\frac{\pl f}{\pl t}(t_0,s_0)=\frac{\pl f}{\pl s}(t_0,s_0)=0$, namely
\[
\langle \nabla I(\sqrt{t_0} u_1,\sqrt{s_0} u_2), ((2\sqrt{t_0})^{-1} u_1,0) \rangle = \langle \nabla I(\sqrt{t_0} u_1,\sqrt{s_0} u_2), (0,(2\sqrt{s_0})^{-1} u_2) \rangle =0,
\]
which implies that $(\sqrt{t_0}u_1,\sqrt{s_0}u_2)\in\mathcal{M}_\beta$. Since $\beta>0$,
\begin{align*}
c_\beta&\le I(\sqrt{t_0}u_1,\sqrt{s_0}u_2)<J_1(\sqrt{t_0}u_1)+J_2(\sqrt{s_0}u_2) -\beta \int_\Omega (e^{\sqrt{t_0s_0}u_1u_2}-1-\sqrt{t_0s_0}u_1u_2)\, \ud x\\
& <J_1(\sqrt{t_0}u_1)+J_2(\sqrt{s_0}u_2) \le\max_{t>0}J_1(\sqrt{t}u_1)+\max_{s>0}J_2(\sqrt{s}u_2)\\
&=J_1(u_1)+J_2(u_2)=E_1+E_2.
\end{align*}
This completes the proof.
\ep

\subsection{Palais-Smale sequence at level $c_\beta$}
By Proposition \ref{Nehari} and Lemma \ref{lower}, we can apply the Ekeland variational principle  \cite{Ekeland}, showing
there exists a minimization sequence $\{(u_n,v_n)\}\subset \mathcal{M}_\beta$ such that
\be\lab{mini}
I(u_n,v_n)\rg c_\beta,\,\,\na I|_{\mathcal{M}_\beta}(u_n,v_n)\rg0\,\,\mbox{in}\,\, X^\ast, \text{ as } n\rg\iy\ .
\ee

\noindent Clearly $\{(u_n,v_n)\}$ depends on $\beta$ and, for any $\beta>0$ fixed, the minimization sequence $\{(u_n,v_n)\}$ may not be unique. Recall from \eqref{eq:defSlambdamu} that, for $\lambda>-\Lambda_1$, $\mu>0$, $S_{\lambda,\mu}$ denotes the set of ground states of the single equation \eqref{limit}. Given $\delta>0$, denote by $(S_{\lambda,\mu})^\delta$ the neighborhood of $S_{\lambda,\mu}$ of radius $\delta$. We have the following.
\bl \label{neighborhood}
For any $\delta>0$, there exists $\beta_\delta>0$ such that for any $\beta\in(0,\beta_\delta)$, up to a subsequence, there exists $\{(u_n^\beta,v_n^\beta)\}\subset \mathcal{M}_\beta$ satisfying \re{mini} and $\{u_n^\beta\}\subset(S_{\la_1,\mu_1})^\delta$ and $\{v_n^\beta\}\subset(S_{\la_2,\mu_2})^\delta$.
\el
\bp
Suppose by contradiction 	the lemma does not hold. Then for some $\delta_0>0$, there exists $\{\beta_k\}\subset\R^+$ such that, $\beta_k\rg0$, as $k\rg\iy$, and for any $\{(u_n^{\beta_k},v_n^{\beta_k})\}\subset \mathcal{M}_{\beta_k}$ satisfying \re{mini}, there holds $\{u_n^{\beta_k}\}\subset H_0^1(\Omega)\backslash(S_{\la_1,\mu_1})^{\delta_0}$ and $\{v_n^{\beta_k}\}\subset H_0^1(\Omega)\backslash(S_{\la_2,\mu_2})^{\delta_0}$. For any $k$, there exists $n_k$ such that
$$
\left|I(u_{n_k}^{\beta_k},v_{n_k}^{\beta_k})-c_{\beta_k}\right|\le1/k.
$$
Let $\ti{u}_k=u_{n_k}^{\beta_k}$ and $\ti{v}_k=v_{n_k}^{\beta_k}$, then
$$
\limsup_{k\rg\iy}I(\ti{u}_k,\ti{v}_k)=\limsup_{k\rg\iy}c_{\beta_k}\le E_{\la_1,\mu_1}+E_{\la_2,\mu_2}.
$$
By \re{am} there exists $C>0$ such that for all $k$,
\be\lab{bound1}
\|\na \ti{u}_k\|_2, \|\na \ti{v}_k\|_2, \int_{\Omega}\ti{u}_k^2\left(e^{\ti{u}_k^2}-1\right)\,\ud x,\int_{\Omega}\ti{v}_k^2\left(e^{\ti{v}_k^2}-1\right)\,\ud x\le C.
\ee
Up to a subsequence, we may assume that $\ti{u}_k\rg u$ and $\ti{v}_k\rg v$ weakly in $H_0^1(\Omega)$ and a. e. in $\Omega$, as $k\rg\iy$.
By Lemma \ref{lower}, we have
\be\lab{ne1}
\liminf_{k\rg\iy}\min\{\|\na \ti{u}_k\|_2, \|\na \ti{v}_k\|_2\}>0.
\ee
Noting that $\beta_k>0$ and
\be\lab{ne2}
\int_{\Omega}(|\na\ti{u}_k|^2+\la_1\ti{u}_k^2)\,\ud x=\mu_1\int_{\Omega}\ti{u}_k^2(e^{\ti{u}_k^2}-1)\,\ud x+o_k(1),
\ee
there exists $t_k\in[1,\iy)$ such that
\be\lab{ne3}
\int_{\Omega}(|\na\ti{u}_k|^2+\la_1\ti{u}_k^2)\,\ud x=\mu_1\int_{\Omega}\ti{u}_k^2(e^{t_k\ti{u}_k^2}-1)\,\ud x,
\ee
that is $\sqrt{t_k}\ti{u}_k\in\mathcal{N}_{\la_1,\mu_1}$. Similarly, there exists $s_k\in[1,\iy)$ such that $\sqrt{s_k}\ti{v}_k\in\mathcal{N}_{\la_2,\mu_2}$.
\vskip0.1in
{\bf Step 1.} We claim that $t_k\rg1$ and $s_k\rg1$, as $k\rg\iy$. We only give the proof of $t_k\rg1$, as the second convergence being similar. We consider two cases:
\vskip0.1in
{\bf Case I.} $u\not=0$. If $\limsup_{k\rg\iy}t_k>1$, then we can assume that $t_k>1$ for all $k$. By \re{ne2} and \re{ne3} we have
$$
(t_k-1)\int_{\Omega}\ti{u}_k^4\,\ud x\le\int_{\Omega}\ti{u}_k^2(e^{t_k\ti{u}_k^2}-e^{\ti{u}_k^2})\,\ud x=o_k(1),
$$
which yields $t_k\rg1$ as $k\rg\iy$. This is a contradiction. So $\limsup_{k\rg\iy}t_k\le1$. Similarly, $\liminf_{k\rg\iy}t_k\ge1$. Then $\lim_{k\rg\iy}t_k=1$.

{\bf Case II.} $u=0$. If $\limsup_{k\rg\iy}t_k>1$, then we can assume that $t_k>t_0>1$ for all $k$. Noting that for any $\e>0$, there exists $R_\e>0$ such that
\begin{align*}
&\limsup_{k\rg\iy}\int_{\Omega}\ti{u}_k^2(e^{\ti{u}_k^2}-1)\,\ud x\\
&\le\e\limsup_{k\rg\iy}\int_{\{|\ti{u}_k|\ge R_\e\}}\ti{u}_k^2(e^{t_k\ti{u}_k^2}-1)\,\ud x+\limsup_{k\rg\iy}\int_{\{|\ti{u}_k|\le R_\e\}}\ti{u}_k^2(e^{\ti{u}_k^2}-1)\,\ud x\\
&\le C\e.
\end{align*}
Since $\e$ is arbitrary, we have $\limsup_{k\rg\iy}\int_{\Omega}\ti{u}_k^2(e^{\ti{u}_k^2}-1)\,\ud x=0$, which contradicts \re{ne1} and \re{ne2}. So $\limsup_{k\rg\iy}t_k\le1$. Similarly, $\liminf_{k\rg\iy}t_k\ge1$. Thus, $\lim_{k\rg\iy}t_k=1$.
\vskip0.1in
{\bf Step 2.} Let $\bar{u}_k=\sqrt{t_k}\ti{u}_k$ and $\bar{v}_k=\sqrt{s_k}\ti{v}_k$, then $\bar{u}_k\rg u$ and $\bar{v}_k\rg v$ weakly in $H_0^1(\Omega)$ and a.e. in $\Omega$, as $k\rg\iy$. In the following, we adopt some idea in \cite{Ad} to show that $u\in S_{\la_1,\mu_1}$, $v\in S_{\la_2,\mu_2}$ and $\bar{u}_k\rg u$, $\bar{v}_k\rg v$ strongly in $H_0^1(\Omega)$, as $k\rg\iy$. This will be a contradiction.

\noindent By Step 1, we know that $\|\na(\bar{u}_k-\ti{u}_k)\|_2\rg0$ and $\|\na(\bar{v}_k-\ti{v}_k)\|_2\rg0$ as $k\rg\iy$. So
\begin{align*}
I(\ti{u}_k,\ti{v}_k)&=I(\bar{u}_k,\bar{v}_k)+o_k(1)=J_{\la_1,\mu_1}(\bar{u}_k)+J_{\la_2,\mu_2}(\bar{v}_k)+o_k(1)\\
&\ge E_{\la_1,\mu_1}+E_{\la_2,\mu_2}+o_k(1).
\end{align*}
Recalling that $\limsup_{k\rg\iy}I(\ti{u}_k,\ti{v}_k)\le E_{\la_1,\mu_1}+E_{\la_2,\mu_2}$, we get
$$
\lim_{k\rg\iy}J_{\la_1,\mu_1}(\bar{u}_k)=E_{\la_1,\mu_1},\,\,\lim_{k\rg\iy}J_{\la_2,\mu_2}(\bar{u}_k)=E_{\la_2,\mu_2}.
$$

\noindent Now, arguing as in the proof of Lemma \ref{priori}, we deduce that $u\not\equiv0$ and
\be\lab{upp}
\int_{\Omega}\left(|\na u|^2+\la_1u^2\right)\,\ud x=\int_{\Omega}\mu_1u^2\left(e^{u^2}-1\right)\,\ud x.
\ee

\noindent By \re{upp}, there exists $t^\ast\in(0,1]$ such that $\sqrt{t^\ast} u\in \mathcal{N}_{\la_1,\mu_1}$ and then
\begin{align*}
&E_{\la_1,\mu_1}\le J_{\la_1,\mu_1}(\sqrt{t^\ast} u)\\
&=\frac{\mu_1}{2}\int_{\Omega}\left[t^\ast u^2\left(e^{t^\ast u^2}-1\right)-\left(e^{t^\ast u^2}-1-t^\ast u^2\right)\right]\,\ud x\\
&\le\frac{\mu_1}{2}\liminf_{k\rg\iy}\int_{\Omega}\left[t^\ast \bar{u}_k^2\left(e^{t^\ast \bar{u}_k^2}-1\right)-\left(e^{t^\ast \bar{u}_k^2}-1-t^\ast \bar{u}_k^2\right)\right]\,\ud x\\
&\le\frac{\mu_1}{2}\liminf_{k\rg\iy}\int_{\Omega}\left[\bar{u}_k^2\left(e^{\bar{u}_k^2}-1\right)-\left(e^{\bar{u}_k^2}-1-\bar{u}_k^2\right)\right]\,\ud x\\
&=\liminf_{k\rg\iy}J_{\la_1,\mu_1}(\bar{u}_k)=E_{\la_1,\mu_1},
\end{align*}
where we used the fact that the function $x(e^x-1)-(e^x-1-x)$ is strictly increasing in $[0,\iy)$. Thus $t^\ast=1$ and $J_{\la_1,\mu_1}(u)=E_{\la_1,\mu_1}$, which is a contradiction.

\noindent Finally, we can similarly prove that $\bar{v}_k\rg v$ strongly in $H_0^1(\Omega)$ and so $v\in S_{\la_2,\mu_2}$. By Step 1, we know that $\ti{u}_k\rg u$ and $\ti{v}_k\rg v$ strongly in $H_0^1(\Omega)$, as $k\rg\iy$, which contradicts that the fact that $\ti{v}_k\in  H_0^1(\Omega)\backslash(S_{\la_1,\mu_1})^{\delta_0}$ and $\ti{v}_k\in H_0^1(\Omega)\backslash(S_{\la_2,\mu_2})^{\delta_0}$ for any $k$. This completes the proof.
\ep
\bc\lab{tl}
There exists $\beta^{\ast\ast}>0$ such that for any fixed $\beta\in(0,\beta^{\ast\ast})$, up to a subsequence, there exists $\{(u_n,v_n)\}\subset \mathcal{M}_\beta$ satisfying \re{mini} and
$$
\sup_n\int_{\Omega}u_n^4e^{u_n^2}\,\ud x<\iy,\,\,\sup_n\int_{\Omega}v_n^4e^{v_n^2}\,\ud x<\iy.
$$
\ec
\bp
By Lemma \ref{neighborhood}, similarly to Lemma \ref{priori}, one can show that, there exists some $\beta^{\ast\ast}>0$ such that for any fixed $\beta\in(0,\beta^{\ast\ast})$, up to a subsequence, there exists $\{(u_n,v_n)\}\subset \mathcal{M}_\beta$ satisfying \re{mini} and $\{(u_n,v_n)\}$ is bounded in $L^\iy(\Omega)\times L^\iy(\Omega)$. Noting that $\sup_n\int_{\Omega}u_n^2e^{u_n^2}\,\ud x<\iy,\,\,\sup_n\int_{\Omega}v_n^2e^{v_n^2}\,\ud x<\iy$, the proof is concluded.
\ep
\noindent Now let $\{(u_n,v_n)\}\subset \mathcal{M}_\beta$ be as in Corollary \ref{tl}. Then by \re{am} there exists $C>0$ such that for all $n$,
\be\lab{bound}
\|\na u_n\|_2, \|\na v_n\|_2, \int_{\Omega}u_n^2\left(e^{u_n^2}-1\right)\,\ud x,\int_{\Omega}v_n^2\left(e^{v_n^2}-1\right)\,\ud x\le C.
\ee
Without loss of generality, we may assume that $u_n\rg u$, $v_n\rg v$ weakly in $H_0^1(\Omega)$, strongly in $L^2(\Omega)$ and a.e. in $\Omega$, as $n\rg\iy$. As a consequence of \cite[Lemma 2.1]{FMR}, we get
\be\lab{con}
\left\{
\begin{array}{ll}
\lim_{n\rg\iy}\int_{\Omega}\left(e^{u_n^2}-1-u_n^2\right)\,\ud x=\int_{\Omega}\left(e^{u^2}-1-u^2\right)\,\ud x,\\
\lim_{n\rg\iy}\int_{\Omega}\left(e^{v_n^2}-1-v_n^2\right)\,\ud x=\int_{\Omega}\left(e^{v^2}-1-v^2\right)\,\ud x,
\end{array}
\right.
\ee
Furthermore, we have
\bo\lab{Palais-Smale} Let $0<\beta<\beta_0=\min\{\sqrt{\mu_1\mu_2},\beta^\ast,\beta^{\ast\ast}\}$. Then $(u,v)$ is a critical point of $I$.
\eo
\bp
\vskip0.1in
{\bf Step 1.} {\bf Claim}:
\be\lab{gradient}
\liminf_{n\rg\iy}\|\na u_n\|_2>0\,\,\mbox{and}\,\,\liminf_{n\rg\iy}\|\na v_n\|_2>0.
\ee
\noindent By contradiction, if the claim does not hold, without loss of generality, we may assume $\lim\limits_{n\rg\iy}\|\na u_n\|_2=0$ and so $u\equiv 0$. Then, similarly to Lemma \ref{lower}, for any $\al\in(0,\mathcal{S}_4)$ and $n$ large enough,
$$
\al\min\left\{1,\frac{\la_1+\Lambda_1}{\Lambda_1}\right\}\le\mu_1\left[\int_{\Omega}u_n^2\left(e^{u_n^2}-1\right)\,\ud x\right]^{\frac12}+\beta\left[\int_{\Omega}v_n^2\left(e^{v_n^2}-1\right)\,\ud x\right]^{\frac12},
$$
and
\begin{equation}\label{conv_to0}
\lim_{n\rg\iy}\int_{\Omega}u_n^2\left(e^{u_n^2}-1\right)\,\ud x=0.
\end{equation}
It follows that
\be\lab{low}
\liminf_{n\rg\iy}\int_{\Omega}v_n^2\left(e^{v_n^2}-1\right)\,\ud x\ge\left[\frac{\mathcal{S}_4}{\beta}\min\left\{1,\frac{\la_1+\Lambda_1}{\Lambda_1}\right\}\right]^2.
\ee
Moreover, it follows from Lemma \ref{bd1}, \eqref{bound}, \Hugo{\eqref{con} and \eqref{conv_to0}},
$$
\lim_{n\rg\iy}\int_{\Omega}|u_nv_n|\left(e^{|u_nv_n|}-1\right)\,\ud x=\lim_{n\rg\iy}\int_{\Omega}\left(e^{|u_nv_n|}-1-|u_nv_n|\right)\,\ud x=0.
$$
So
\begin{align*}
I(u_n,v_n)=\frac{1}{2}\int_{\Omega}\left(|\na v_n|^2+\la_2|v_n|^2\right)\,\ud x-\frac{\mu_2}{2}\int_{\Omega}\left(e^{v_n^2}-1-v_n^2\right)\,\ud x+o_n(1),
\end{align*}
where $o_n(1)\rg0$ as $n\rg\iy$. Noting that $(u_n,v_n)\in\mathcal{M}_\beta$,
$$
\int_{\Omega}\left(|\na v_n|^2+\la_2|v_n|^2\right)\,\ud x=\mu_2\int_{\Omega}v_n^2\left(e^{v_n^2}-1\right)\,\ud x+o_n(1),
$$
which implies that
\begin{align*}
I(u_n,v_n)&=\frac{\mu_2}{2}\int_{\Omega}\left[v_n^2\left(e^{v_n^2}-1\right)-\left(e^{v_n^2}-1-v_n^2\right)\right]\,\ud x+o_n(1)\\
&\ge\frac{\mu_2}{4}\int_{\Omega}v_n^2\left(e^{v_n^2}-1\right)\,\ud x+o_n(1),
\end{align*}
where we used Lemma \ref{bd1}. Then by \re{low},
$$
c_\beta=\liminf_{n\rg\iy}I(u_n,v_n)\ge\frac{\mu_2}{4}\left[\frac{\mathcal{S}_4}{\beta}\min\left\{1,\frac{\la_1+\Lambda_1}{\Lambda_1}\right\}\right]^2.
$$
By Lemma \ref{upper} and the choice of $\beta<\beta_3$, we get a contradiction. On the other hand, if we assume that $\lim\limits_{n\rg\iy}\|\na v_n\|_2=0$ we get a contradiction from $\beta<\beta_4$.Thus, the claim is true.

\vskip0.1in
{\bf Step 2.} By the Ekeland variational principle \cite{Ekeland}, there exists $\{(t_n,s_n)\}\subset\R^2$ such that
\be\lab{ps}
\na I|_{\mathcal{M}_\beta}(u_n,v_n)=\na I(u_n,v_n)-t_n\na G_1(u_n,v_n)-s_n\na G_2(u_n,v_n)\rg0\,\,\mbox{in}\,\, X^\ast.
\ee
We claim that $t_n, s_n\rg0$ as $n\rg\iy$. By \re{ps} and $(u_n,v_n)\in\mathcal{M}_\beta$, we have
\be\lab{sys}
\begin{cases}
\lan t_n\na G_1(u_n,v_n)+s_n\na G_2(u_n,v_n),(u_n,0)\ran=o_n(1),\\
\lan t_n\na G_1(u_n,v_n)+s_n\na G_2(u_n,v_n),(0,v_n)\ran=o_n(1).
\end{cases}
\ee
Similar to Proposition \ref{Nehari}, system \re{sys} is equivalent to
\be\lab{matrix}
J_n\cdot(t_n,s_n)^T=(o_n(1),o_n(1))^T,
\ee
where
\begin{align*}
J_n=\left(
\begin{array}{cc}
a_n & c_n \\
c_n & b_n
\end{array}
\right),
\end{align*}
\[
\begin{cases}
a_n=\int_{\Omega}\left[2\mu_1u_n^4e^{u_n^2}+\beta[|u_nv_n|^2e^{|u_nv_n|}-|u_nv_n|(e^{|u_nv_n|}-1)]\right]\,\ud x,\\
b_n=\int_{\Omega}\left[2\mu_2v_n^4e^{v_n^2}+\beta[|u_nv_n|^2e^{|u_nv_n|}-|u_nv_n|(e^{|u_nv_n|}-1)]\right]\,\ud x,\\
c_n=\int_{\Omega}\left[|u_nv_n|^2e^{|u_nv_n|}+|u_nv_n|(e^{|u_nv_n|}-1)\right]\,\ud x,
\end{cases}
\]
and
\be\lab{deter}
\mbox{det}(J_n)\ge4(\mu_1\mu_2-\beta^2)\int_{\Omega}u_n^4e^{u_n^2}\,\ud x\int_{\Omega}v_n^4e^{v_n^2}\,\ud x>0.
\ee
By Step 1, there exists $c>0$ such that for all $n$,
\be\lab{key}
\int_{\Omega}u_n^4e^{u_n^2}\,\ud x,\int_{\Omega}v_n^4e^{v_n^2}\,\ud x\ge c.
\ee
If not, we assume that
$$
\lim_{n\rg\iy}\int_{\Omega}u_n^4e^{u_n^2}\,\ud x=0,
$$
then
$$
\lim_{n\rg\iy}\int_{\Omega}u_n^2\left(e^{u_n^2}-1\right)\,\ud x\le\lim_{n\rg\iy}\int_{\Omega}u_n^4e^{u_n^2}\,\ud x=0.
$$
Then, similar as above, we know $\|\na u_n\|_2\rg0$, as $n\rg\iy$, which is a contradiction.

\noindent By Cramer's rule,
$$
t_n=o_n(1)\frac{b_n}{\mbox{det}(J_n)}+o_n(1)\frac{c_n}{\mbox{det}(J_n)},\,\,s_n=o_n(1)\frac{a_n}{\mbox{det}(J_n)}+o_n(1)\frac{c_n}{\mbox{det}(J_n)}.
$$
Now, since
\begin{align*}
0<a_n\le 2\mu_1\int_{\Omega}u_n^4e^{u_n^2}\,\ud x+\beta\left[\int_{\Omega}u_n^4e^{u_n^2}\,\ud x\right]^{1/2}\left[\int_{\Omega}v_n^4e^{v_n^2}\,\ud x\right]^{1/2},
\end{align*}
by \re{deter}, there exists $C_1>0$ (independent of $n$) such that
\be\lab{van1}
0<\frac{a_n}{\mbox{det}(J_n)}\le C_1\left[\left(\int_{\Omega}u_n^4e^{u_n^2}\,\ud x\right)^{-1}+\left(\int_{\Omega}v_n^4e^{v_n^2}\,\ud x\right)^{-1}\right].
\ee
Similarly,
\be\lab{van2}
0<\frac{b_n}{\mbox{det}(J_n)},\frac{c_n}{\mbox{det}(J_n)}\le C_1\left[\left(\int_{\Omega}u_n^4e^{u_n^2}\,\ud x\right)^{-1}+\left(\int_{\Omega}v_n^4e^{v_n^2}\,\ud x\right)^{-1}\right].
\ee
It follows from \re{key} that $\left\{\frac{a_n}{\mbox{det}(J_n)}\right\}$, $\left\{\frac{b_n}{\mbox{det}(J_n)}\right\}$ and $\left\{\frac{c_n}{\mbox{det}(J_n)}\right\}$ are bounded. Thus $t_n,s_n\rg0$, as $n\rg\iy$.

\vskip0.1in
{\bf Step 3.} We claim that for any fixed $\vp,\phi\in C_0^\iy(\Omega)$, $\lan I'(u_n,v_n),(\vp,\phi)\ran\rg0$, as $n\rg\iy$, which implies that $I'(u,v)=0$ in $X^\ast$. By \re{ps},
$$
\lan I'(u_n,v_n),(\vp,\phi)\ran=t_n\lan\na G_1(u_n,v_n),(\vp,\phi)\ran+s_n\lan\na G_2(u_n,v_n),(\vp,\phi)\ran+o_n(1).
$$
On the one hand,
\begin{align*}
\lan I'(u_n,v_n),(\vp,\phi)\ran=&\int_{\Omega}(\na u_n\na\vp+\na v_n\na\phi+\la_1u_n\vp+\la_2v_n\phi)\\
&-\int_{\Omega}H_u(u_n,v_n)\vp+H_v(u_n,v_n)\phi\\
=&\int_{\Omega}(\na u\na\vp+\na v\na\phi+\la_1u\vp+\la_2v\phi)\\
&-\int_{\Omega}H_u(u_n,v_n)\vp+H_v(u_n,v_n)\phi+o_n(1).
\end{align*}
Since
$$
\int_{\Omega}u_nH_u(u_n,v_n)\,\ud x=\int_{\Omega}\left(\mu_1u_n^2(e^{u_n^2}-1)+\beta|u_nv_n|(e^{|u_nv_n|}-1)\right)\,\ud x
$$
is bounded uniformly for $n$, by \cite[Lemma 2.1]{FMR} we get that
$$
\lim_{n\rg\iy}\int_{\Omega}H_u(u_n,v_n)\vp=\int_{\Omega}H_u(u,v)\vp.
$$
Similarly,
$$
\lim_{n\rg\iy}\int_{\Omega}H_v(u_n,v_n)\phi=\int_{\Omega}H_v(u,v)\phi.
$$
Then
$$
\lim_{n\rg\iy}\lan I'(u_n,v_n),(\vp,\phi)\ran=\lan I'(u,v),(\vp,\phi)\ran.
$$
On the other hand, we have that
$$
t_n\lan\na G_1(u_n,v_n),(\vp,\phi)\ran=o_n(1), s_n\lan\na G_2(u_n,v_n),(\vp,\phi)\ran=o_n(1).
$$
We only prove
$$
t_n\lan\na G_1(u_n,v_n),(\vp,\phi)\ran=o_n(1).
$$
The left one can be proved similarly. Notice that
\begin{align*}
\lan \na G_1(u_n,v_n),(\vp,\phi)\ran=&\int_{\Omega}\left[ 2(\na u_n\na \vp+\la_1 u_n\vp)-\vp H_u(u_n,v_n)\right.\\
&\left.-u_n\vp H_{uu}(u_n,v_n)-u_n\phi H_{uv}(u_n,v_n)\right]\, \ud x,
\end{align*}
and
$$
\sup_{n}\left|\int_{\Omega}(\na u_n\na \vp+\la_1 u_n\vp)\, \ud x\right|\le\sup_{n}\left(\|\na u_n\|_2\|\na\vp\|_2+|\la_1|\|u_n\|_2\|\vp\|_2\right)<\iy.
$$
Moreover,
\begin{align*}
&\left|\int_{\Omega}\vp H_u(u_n,v_n)\, \ud x\right|\le\|\vp\|_\iy\int_{\Omega}\left|H_u(u_n,v_n)\right|\, \ud x\\
&\le\|\vp\|_\iy\int_{\Omega}\left[\mu_1|u_n|(e^{u_n^2}-1)+\beta|v_n|(e^{|u_nv_n|}-1)\right]\, \ud x\\
&\le\|\vp\|_\iy\int_{\{|u_n(x)\ge1|\}}\left[\mu_1u_n^2(e^{u_n^2}-1)+\beta|u_nv_n|(e^{|u_nv_n|}-1)\right]\, \ud x\\
&\,\,\,\,\,\,+\|\vp\|_\iy\int_{\{|u_n(x)\le1|\}}\left[\mu_1(e-1)+\beta|v_n|(e^{|v_n|}-1)\right]\, \ud x\\
\end{align*}
Since
$$
\int_{\{|u_n(x)\le1|\}}|v_n|(e^{|v_n|}-1)\, \ud x\le\sqrt{|\Omega|(e-1)}\left[\int_{\Omega}v_n^2(e^{v_n^2}-1)\, \ud x\right]^{1/2}
$$
we have
$$
\sup_{n}\left|\int_{\Omega}\vp H_u(u_n,v_n)\, \ud x\right|<\iy.
$$
Then by Step 2, it is enough to show
$$
t_n\int_{\Omega}\left[u_n\vp H_{uu}(u_n,v_n)+u_n\phi H_{uv}(u_n,v_n)\right]\, \ud x=o_n(1).
$$
From
$$
\int_{\Omega}|u_n|^3e^{u_n^2}\, \ud x\le\left(\int_{\Omega}u_n^2e^{u_n^2}\, \ud x\right)^{1/2}\left(\int_{\Omega}u_n^4e^{u_n^2}\, \ud x\right)^{1/2},
$$
$$
\int_{\Omega}|u_n|(e^{u_n^2}-1)\, \ud x\le\left(\int_{\Omega}u_n^2e^{u_n^2}\, \ud x\right)^{1/2}\left(\int_{\Omega}e^{u_n^2}\, \ud x\right)^{1/2},
$$
$$
\int_{\Omega}|u_n|v_n^2e^{|u_nv_n|}\, \ud x\le\left(\int_{\Omega}u_n^2e^{u_n^2}\, \ud x\right)^{1/2}\left(\int_{\Omega}v_n^4e^{v_n^2}\, \ud x\right)^{1/2},
$$
there exists $C_2>0$(independent of $n$) such that
\begin{align}\lab{key2}
&\left|\int_{\Omega}u_n\vp H_{uu}(u_n,v_n)\, \ud x\right|\le\|\vp\|_\iy\int_{\Omega}\left|u_nH_{uu}(u_n,v_n)\right|\, \ud x\nonumber\\
&\le\|\vp\|_\iy\int_{\Omega}\left[2\mu_1|u_n|^3e^{u_n^2}+\mu_1|u_n|(e^{u_n^2}-1)+\beta|u_n|v_n^2e^{|u_nv_n|}\right]\, \ud x\nonumber\\
&\le C_2\left[\left(\int_{\Omega}u_n^4e^{u_n^2}\, \ud x\right)^{1/2}+\left(\int_{\Omega}v_n^4e^{v_n^2}\, \ud x\right)^{1/2}\right]+C_2.
\end{align}

\noindent On the other side, since
$$
\int_{\Omega}u_n^2|v_n|e^{|u_nv_n|}\, \ud x\le\left(\int_{\Omega}v_n^2e^{v_n^2}\, \ud x\right)^{1/2}\left(\int_{\Omega}u_n^4e^{u_n^2}\, \ud x\right)^{1/2},
$$
$$
\int_{\Omega}|u_n|(e^{|u_nv_n|}-1)\, \ud x\le\left(\int_{\Omega}u_n^2e^{u_n^2}\, \ud x\right)^{1/2}\left(\int_{\Omega}e^{v_n^2}\, \ud x\right)^{1/2},
$$
there exists $C_3>0$, which does not depend on $n$, such that
\begin{align}\lab{key3}
&\left|\int_{\Omega}u_n\phi H_{uv}(u_n,v_n)\, \ud x\right|\le\|\phi\|_\iy\int_{\Omega}\left|u_nH_{uv}(u_n,v_n)\right|\, \ud x\nonumber\\
&\le\|\phi\|_\iy\int_{\Omega}\left[u_n^2|v_n|e^{|u_nv_n|}+|u_n|(e^{|u_nv_n|}-1)\right]\, \ud x\nonumber\\
&\le C_3\left(\int_{\Omega}u_n^4e^{u_n^2}\, \ud x\right)^{1/2}+C_3.
\end{align}
We can conclude the proof by combining \re{key2}, \re{key3} and Corollary \ref{tl}.
\ep

\subsection{Conclusion of the proof of Theorem \ref{Th1}}

\bp
Let $(u,v)$ be given in Proposition \ref{Palais-Smale}, then $I'(u,v)=0$. If $u\not\equiv0, v\not\equiv0$, then $(u,v)\in\mathcal{M}_\beta$ and thus
$I(u,v)\ge c_\beta$. By $(u_n,v_n)\in\mathcal{M}_\beta$ and taking $p=4$ in \re{am}, Fatou's lemma yields
$$
c_\beta=\liminf_{n\rg\iy}I(u_n,v_n)\ge I(u,v).
$$
Hence $I(u,v)=c_\beta$. Noting that $(|u|,|v|)\in\mathcal{M}_\beta$ and $I(u,v)=I(|u|,|v|)$, without loss of generality, we may assume $(u,v)$ is a minimizer for $c_\beta$ on $\mathcal{M}_\beta$. By the Lagrange multiplier theorem, there exist $\al,\beta\in\R$ such that
$$
\na I|_{\mathcal{M}_\beta}(u,v)=\na I(u,v)-\al\na G_1(u,v)-\beta\na G_2(u,v)=0\,\,\mbox{in}\,\, X^\ast.
$$
It follows from Proposition \ref{Nehari} that $\al=\beta=0$ and  by the maximum principle $(u,v)$ is a positive ground state solution of \re{q1}. The last statement of Theorem \ref{Th1} is a direct consequence of Lemma \ref{neighborhood} combined with Lemma \ref{priori}.
\vskip0.1in
\noindent Now, we next show that actually $u\not\equiv0, v\not\equiv0$. We proceed by a contradiction argument.
\vskip0.1in
{\bf Case 1.} $u\equiv v\equiv 0$. For any $p>2$ in \re{am},
$$
I(u_n,v_n)=\frac{p-2}{2p}\int_{\Omega}(|\na u_n|^2+|\na v_n|^2+\la_1u_n^2+\la_2v_n^2)\,\ud x+K_p(u_n,v_n),
$$
where
\begin{align*}
K_p(u_n,v_n)=&\frac{\mu_1}{p}\int_{\Omega}\left[u_n^2(e^{u_n^2}-1)-\frac{p}{2}(e^{u_n^2}-1-u_n^2)\right]\,\ud x\\
&+\frac{2\beta}{p}\int_{\Omega}\left[|u_nv_n|(e^{|u_nv_n|}-1)-\frac{p}{2}(e^{|u_nv_n|}-1-|u_nv_n|^2)\right]\,\ud x\\
&+\frac{\mu_2}{p}\int_{\Omega}\left[v_n^2(e^{v_n^2}-1)-\frac{p}{2}(e^{v_n^2}-1-v_n^2)\right]\,\ud x.
\end{align*}
Observe that there exists $R_p>0$ such that for any $n$,
\begin{align*}
\left\{
\begin{array}{lcr}
\int_{\{|u_n(x)|\ge R_p\}}\left[u_n^2(e^{u_n^2}-1)-\frac{p}{2}(e^{u_n^2}-1-u_n^2)\right]\,\ud x\ge0,\\
\int_{\{|u_nv_n(x)|\ge R_p\}}\left[|u_nv_n|(e^{|u_nv_n|}-1)-\frac{p}{2}(e^{|u_nv_n|}-1-|u_nv_n|^2)\right]\,\ud x\ge0,\\
\int_{\{|v_n(x)|\ge R_p\}}\left[v_n^2(e^{v_n^2}-1)-\frac{p}{2}(e^{v_n^2}-1-v_n^2)\right]\,\ud x\ge0.
\end{array}
\right.
\end{align*}
By the Lebesgue dominated convergence theorem and since $u,v\equiv 0$,
\begin{align*}
&\liminf_{n\rg\iy}K_p(u_n,v_n)\\
&\ge\liminf_{n\rg\iy}\frac{\mu_1}{p}\int_{\{|u_n(x)|\le R_p\}}\left[u_n^2(e^{u_n^2}-1)-\frac{p}{2}(e^{u_n^2}-1-u_n^2)\right]\,\ud x\\
&+\liminf_{n\rg\iy}\frac{2\beta}{p}\int_{\{|u_nv_n(x)|\le R_p\}}\left[|u_nv_n|(e^{|u_nv_n|}-1)-\frac{p}{2}(e^{|u_nv_n|}-1-|u_nv_n|^2)\right]\,\ud x\\
&+\liminf_{n\rg\iy}\frac{\mu_2}{p}\int_{\{|v_n(x)|\le R_p\}}\left[v_n^2(e^{v_n^2}-1)-\frac{p}{2}(e^{v_n^2}-1-v_n^2)\right]\,\ud x\\
&=0.
\end{align*}
Then
$$
\liminf_{n\rg\iy}\int_{\Omega}(|\na u_n|^2+|\na v_n|^2)\,\ud x\le\frac{2p}{p-2}\liminf_{n\rg\iy}I(u_n,v_n)=\frac{2p}{p-2}c_\beta.
$$
Since $p$ is arbitrary, we get that $$\liminf_{n\rg\iy}\int_{\Omega}(|\na u_n|^2+|\na v_n|^2)\,\ud x\le 2c_\beta.$$ Since $c_\beta<E_1+E_2<4\pi$, without loss of generality, we assume that $\sup_n\|\na u_n\|_2^2:=\delta<4\pi$. Let $\ti{u}_n=\frac{u_n}{\|\na u_n\|_2}$ and $q>1$ such that $q\delta<4\pi$. By Lemma \ref{at} and \re{gradient},
\begin{align*}
&\limsup_{n\rg\iy}\int_{\Omega}\left(e^{qu_n^2}-1\right)\,\ud x=\limsup_{n\rg\iy}\int_{\Omega}\left(e^{q\|\na u_n\|_2^2\ti{u}_n^2}-1\right)\,\ud x\\
&\le\frac{C}{1-q\delta/4\pi}\limsup_{n\rg\iy}\|\ti{u}_n\|_2^2=\frac{C}{1-q\delta/4\pi}\frac{1}{\liminf_{n\rg\iy}\|\na u_n\|_2^2}\lim_{n\to \infty}\|u_n\|_2^2=0.
\end{align*}
We have
$$
\lim_{n\rg\iy}\int_{\Omega}u_n^2\left(e^{u_n^2}-1\right)\,\ud x=0\,\,\mbox{and}\,\,\lim_{n\rg\iy}\int_{\Omega}|u_nv_n|\left(e^{|u_nv_n|}-1\right)\,\ud x=0.
$$
Recalling that $u_n\rg0$ in $L^2(\Omega)$ as $n\rg\iy$, and
$$
\int_{\Omega}\left(|\na u_n|^2+\la_1|u_n|^2\right)\,\ud x=\int_{\Omega}\left[\mu_1 u_n^2\left(e^{u_n^2}-1\right)+\beta|u_nv_n|\left(e^{|u_nv_n|}-1\right)\right]\,\ud x,
$$
the following holds
$$
\lim_{n\rg\iy}\|\na u_n\|_2=0,
$$
which contradicts \re{gradient}.

\vskip0.1in
{\bf Case 2.} $u\not\equiv0, v\equiv 0$ or $u\equiv 0, v\not\equiv0$. Assume $u\equiv 0, v\not\equiv0$, then $v$ is a nontrivial solution of problem \re{lm1} with $i=2$.
Then $J_2(v)\ge E_2$. By \re{con} and Lemma \ref{bd1},
$$
\lim_{n\rg\iy}\int_{\Omega}\left(e^{u_n^2}-1-u_n^2\right)\,\ud x=\lim_{n\rg\iy}\int_{\Omega}\left(e^{|u_nv_n|}-1-|v_nu_n|\right)\,\ud x=0
$$
so that
$$
\limsup_{n\rg\iy}I(u_n,v_n)=\frac{1}{2}\limsup_{n\rg\iy}\|\na u_n\|_2^2+J_2(v)=c_\beta
$$
which yields
$$
\limsup_{n\rg\iy}\|\na u_n\|_2^2=2(c_\beta-J_2(v))<2(c_\beta-E_2)<2E_1<4\pi.
$$
Analogously to the Case 1 we can get a contradiction. This completes the proof.
\ep

\s{The case $\beta>0$ large (strong cooperation): proof of Theorem \ref{Th2}}\label{sec4}
\subsection{Nehari manifold} Define the Nehari manifold as follows
$$
\mathcal{N}_{\beta}:=
\left\{
(u,v)\in X\setminus\{(0,0)\}: \ti{J}(u,v):=\langle I'(u,v),(u,v)\rangle=0
\right\},
$$
so that
\begin{align*}
\ti{J}(u,v) =\int_{\Omega}(|\na u|^2+\la_1u^2+|\na v|^2+\la_2v^2)\,\ud x-\int_{\Omega}\left[uH_u(u,v)+vH_v(u,v)\right]\,\ud x,
\end{align*}
and the least energy level
$$
d_\beta:=\inf_{(u,v)\in\mathcal{N}_{\beta}}I(u,v).
$$
Obviously, $(u_1,0)$ and $(0,u_2)$ belongs to $\mathcal{N}_{\beta}$. So $d_\beta\le\min\{E_1,E_2\}$. As in Lemma \ref{lower} one has $d_\beta\ge0$.
\bl\lab{asym}
The following holds:
\begin{itemize}
\item [{\rm(i)}]  For any $\beta>0$, $d_\beta>0$.
\item [{\rm(ii)}]  $\displaystyle d_\beta=\inf_{(u,v)\in X\setminus\{(0.0)\}}\max_{t\ge0}I(\sqrt{t}u,\sqrt{t}v)$, and  $d_\beta$ is non-increasing with respect to $\beta>0$.
\item [{\rm(iii)}]  $d_\beta<\min\{E_1,E_2\}$ if $\beta\ge\bar{\beta}_0.$
\item [{\rm(iv)}]  $d_\beta\rg0$ as $\beta\rg+\iy.$
\end{itemize}
\el

\bp For convenience let us divide the proof into three steps:

{\bf Step 1.} We prove $(i)$ by contradiction. If $d_\beta=0$ for some $\beta>0$, then there exists $(u_n,v_n)\in\mathcal{N}_{\beta}$ such that $I(u_n,v_n)\rg0$, as $n\rg\iy$. As in Lemma \ref{lower}, $\|\na u_n\|_2\rg0$ and $\|\na v_n\|_2\rg0$, as $n\rg\iy$. Moreover, for any $\al\in(0,\mathcal{S}_4)$ and $n$ large enough,
\begin{align}\lab{guji1n}
\al\left[\int_{\Omega}u_n^2\left(e^{u_n^2}-1\right)\,\ud x\right]^{\frac12}\le\int_{\Omega}|\na u_n|^2\,\ud x,\qquad
\al\left[\int_{\Omega}v_n^2\left(e^{v_n^2}-1\right)\,\ud x\right]^{\frac12}\le\int_{\Omega}|\na v_n|^2\,\ud x,
\end{align}
and
\be\lab{vann}
\lim_{n\rg\iy}\int_{\Omega}u_n^2\left(e^{u_n^2}-1\right)=\lim_{n\rg\iy}\int_{\Omega}v_n^2\left(e^{v_n^2}-1\right)\,\ud x=0.
\ee
Since $\la_1,\la_2>-\Lambda_1$, we get
\begin{align}\lab{guji3n}
\begin{cases}
\int_{\Omega}(|\na u_n|^2+\la_1u_n^2)\,\ud x\ge \min\{1,\frac{\la_1+\Lambda_1}{\Lambda_1}\}\int_{\Omega}|\na u_n|^2\,\ud x,\\
\int_{\Omega}(|\na v_n|^2+\la_2v_n^2)\,\ud x\ge \min\{1,\frac{\la_2+\Lambda_1}{\Lambda_1}\}\int_{\Omega}|\na v_n|^2\,\ud x.
\end{cases}
\end{align}
Then, by Lemma \ref{bd1}, \re{guji1n}, \re{guji3n} and $(u_n,v_n)\in\mathcal{N}_{\beta}$, for $n$ large enough,
\begin{align*}
&\left[\int_{\Omega}u_n^2\left(e^{u_n^2}-1\right)\,\ud x\right]^{\frac12}+\left[\int_{\Omega}v_n^2\left(e^{v_n^2}-1\right)\,\ud x\right]^{\frac12}\\
&\le c\left(\left[\int_{\Omega}u_n^2\left(e^{u_n^2}-1\right)\,\ud x\right]^{\frac12}
+\left[\int_{\Omega}v_n^2\left(e^{v_n^2}-1\right)\,\ud x\right]^{\frac12}\right)^2,
\end{align*}
which contradicts \re{vann}, where
$$
c=\frac{1}{\alpha}\max\left\{1,\frac{\Lambda_1}{\lambda_1+\Lambda_1},\frac{\Lambda_1}{\lambda_2+\Lambda_2}\right\}\max\{\mu_1,\mu_2,\beta\}>0.
$$

{\bf Step 2.} We claim that for $\beta>0$,
$$
d_\beta=\inf_{(u,v)\in X\setminus\{(0.0)\}}\max_{t\ge0}I(\sqrt{t}u,\sqrt{t}v).
$$
As a straightforward consequence, $d_\beta$ is non-increasing in $\beta>0$.
In fact, for any $(u,v)\in X\setminus\{(0,0)\}$ and $t\ge0$ and setting $f(t)=I(\sqrt{t}u,\sqrt{t}v)$, namely
\begin{align*}
f(t)=&\frac{t}{2}\int_{\Omega}\left(|\na u|^2+\la_1u^2+|\na v|^2+\la_2v^2\right)\,\ud x\\
&-\frac{1}{2}\int_{\Omega}\left[\mu_1\left(e^{tu^2}-1-tu^2\right)+\mu_2\left(e^{tv^2}-1-tv^2\right)+2\beta\left(e^{t|uv|}-1-t|uv|\right)\right]\,\ud x\ ,
\end{align*}
similarly to Lemma \ref{upper} one has $f(t)>0$ for $t$ small enough,
and $f(t)<0$ for $t$ large enough; moreover, it is easy to check that $f$ admits a unique critical point.
Then, there exists a unique $t_0>0$ such that
$$
f(t_0)=\max_{t\ge0}f(t)>0.
$$
From $f'(t_0)=0$ one has $(\sqrt{t_0}u,\sqrt{t_0}v)\in\mathcal{N}_\beta$. On the other hand, by the uniqueness of critical points of $f$, for any $(u,v)\in\mathcal{N}_\beta$, $f(1)=\max_{t\ge0}f(t)$. Thus
$$
\inf_{(u,v)\in E\setminus\{(0.0)\}}\max_{t\ge0}I(\sqrt{t}u,\sqrt{t}v)=\inf_{(u,v)\in\mathcal{N}_\beta}I(u,v)=d_\beta.
$$

{\bf Step 3.} Assume $\la_2\le\la_1$. For any $t\ge0$, let $g(t)=I(\sqrt{t}u_1,\sqrt{t}u_1)$, where $u_1=u_{\lambda_1,\mu_1}$ is given in Section 1. That is,
$$
g(t)=\frac{t}{2}\int_{\Omega}\left(2|\na u_1|^2+(\la_1+\la_2)u_1^2\right)\,\ud x-\frac{1}{2}(\mu_1+\mu_2+2\beta)\int_{\Omega}\left(e^{tu_1^2}-1-tu_1^2\right)\,\ud x.
$$
As in the previous step, there exists $t_1>0$ such that $g(t_1)=\max_{t\ge0}g(t)>0$ and $(\sqrt{t_1}u_1,\sqrt{t_1}u_1)\in\mathcal{N}_\beta$. Then, $d_\beta \leq I(\sqrt{t_1}u_1,\sqrt{t_1} u_1)<\max_{t\ge0}h(t)$,
where
\begin{equation}\label{h(t)}
h(t)=t\int_{\Omega}\left(|\na u_1|^2+\la_1u_1^2\right)\,\ud x-\beta\int_{\Omega}\left(e^{tu_1^2}-1-tu_1^2\right)\,\ud x.
\end{equation}
Obviously, there exists $t_\beta>0$ such that $\max_{t\ge0}h(t)=h(t_\beta)$ and
$$
h'(t_\beta)=\int_{\Omega}\left(|\na u_1|^2+\la_1u_1^2\right)\,\ud x-\beta\int_{\Omega}u_1^2\left(e^{t_\beta u_1^2}-1\right)\,\ud x=0.
$$
Since
$$
\int_{\Omega}\left(|\na u_1|^2+\la_1u_1^2\right)\,\ud x=\mu_1\int_{\Omega}u_1^2\left(e^{u_1^2}-1\right)\,\ud x,
$$
we get $t_\beta\in(0,\beta_5/\beta)$. So by \re{h(t)},
$$
d_\beta<h(t_\beta)<t_\beta\int_{\Omega}\left(|\na u_1|^2+\la_1u_1^2\right)\,\ud x<4E_1\beta_5/\beta.
$$
In a similar fashion, if $\la_1\le\la_2$, we get that $d_\beta<4E_2\beta_6/\beta$. In conclusion,
$$
d_\beta<\frac{4}{\beta}\max\{E_1\beta_5,E_2\beta_6\},
$$
which implies $d_\beta\rg0$, as $\beta\rg+\iy$, and $d_\beta<\min\{E_1,E_2\}$ provided
\[
\beta\ge\bar{\beta}_0=4\frac{\max\{E_1\beta_5, E_2\beta_6\}}{\min\{E_1,E_2\}}.\qedhere
\]
\ep

\subsection{Mountain pass geometry}
In contrast with the proof of Theorem \ref{Th1}, here, via the mountain pass theorem, we give a Palais-Smale sequence at the least energy level. Now, we show that the functional $I$ satisfies the mountain pass geometry. Obviously, $I(0,0)=0$. For some $(u_0,v_0)\in E\setminus\{(0,0)\}$, reasoning similarly as above, there exists $t_0>0$ such that $I(t_0u_0,t_0v_0)<0.$ Set
$$
\G:=\{\g\in C([0,1], X): \g(0)=0,I(\g(1))<0\};
$$
then $\G\not=\emptyset$, since $\g_0\in\G$, where $\g_0(t)=(tt_0u_0,tt_0v_0)$. For any $\beta>0$, define
$$
d_\beta^{MP}:=\inf_{\g\in\G}\max_{t\in[0,1]}I(\g(t)).
$$

\begin{lemma}
For any $\beta>0$, there exists $\rho^*>0$ such that, for every $\rho\in (0,\rho^*)$,
\[
d_\beta^{MP}\ge\inf_{\stackrel{(u,v)\in X}{\|(u,v)\|=\rho}}I(u,v)>0.\qedhere
\]
\end{lemma}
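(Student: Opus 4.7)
The plan is to prove two independent facts which together imply the lemma: \textbf{(A)} there is $\rho^*>0$ such that for every $\rho\in(0,\rho^*)$ one has $\inf\{I(u,v):\|(u,v)\|=\rho\}>0$, and \textbf{(B)} for every $\gamma\in\Gamma$ and every $\rho\in(0,\rho^*)$ the path $\gamma$ meets the sphere $\{\|(u,v)\|=\rho\}$, so $\max_{t\in[0,1]} I(\gamma(t))\geq \inf\{I(u,v):\|(u,v)\|=\rho\}$.

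For \textbf{(A)}, I would first observe that since $\lambda_1,\lambda_2>-\Lambda_1$, Poincar\'e's inequality yields a constant $c_0=c_0(\lambda_1,\lambda_2,\Lambda_1)>0$ such that the quadratic part of $I$ satisfies
\[
\tfrac12\int_{\Omega}\bigl(|\nabla u|^2+\lambda_1 u^2+|\nabla v|^2+\lambda_2 v^2\bigr)\,\ud x \;\geq\; c_0\|(u,v)\|^2,
\]
where $\|(u,v)\|^2:=\|\nabla u\|_2^2+\|\nabla v\|_2^2$. The task is then to bound $\int_{\Omega}H(u,v)$ by $C\|(u,v)\|^4$ for $\|(u,v)\|$ small. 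For the diagonal pieces, Lemma \ref{bd1}(iii) gives $e^{u^2}-1-u^2\leq\tfrac12 u^2(e^{u^2}-1)$, and rescaling $u=\|\nabla u\|_2\tilde u$ (so $\|\nabla\tilde u\|_2=1$) together with Lemma \ref{atn} with $\gamma=\|\nabla u\|_2^2<4\pi$, or alternatively Cauchy--Schwarz combined with Lemma \ref{at} applied to $e^{2u^2}$, gives $\int_\Omega u^2(e^{u^2}-1)\leq C\|\nabla u\|_2^4$ uniformly whenever $\|\nabla u\|_2$ is below a fixed threshold (and similarly for $v$). For the mixed term $\int_\Omega(e^{|uv|}-1-|uv|)$, Lemma \ref{bd1}(ii) and Cauchy--Schwarz give the estimate
\[
\int_{\Omega}\bigl(e^{|uv|}-1-|uv|\bigr)\,\ud x \;\leq\; \Bigl(\!\int_{\Omega}(e^{u^2}-1-u^2)\,\ud x\Bigr)^{\!1/2}\!\Bigl(\!\int_{\Omega}(e^{v^2}-1-v^2)\,\ud x\Bigr)^{\!1/2}\!\leq\, C\|\nabla u\|_2^2\|\nabla v\|_2^2.
\]
Summing these estimates produces $\int_\Omega H(u,v)\leq C(\mu_1,\mu_2,\beta)\|(u,v)\|^4$, and hence $I(u,v)\geq c_0\|(u,v)\|^2-C\|(u,v)\|^4$. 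Choosing $\rho^*$ small enough (depending on $\beta$) that $C\rho^{*2}<c_0/2$ makes the right-hand side at least $(c_0/2)\|(u,v)\|^2>0$ on the punctured ball of radius $\rho^*$, in particular on every sphere of radius $\rho\in(0,\rho^*)$.

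For \textbf{(B)}, I would use the fact that any $\gamma\in\Gamma$ satisfies $\gamma(0)=0$, so $\|\gamma(0)\|=0<\rho$, and $I(\gamma(1))<0$. Since \textbf{(A)} gives $I>0$ on the whole closed ball $\{\|(u,v)\|\leq\rho^*\}\setminus\{0\}$, the endpoint $\gamma(1)$ must satisfy $\|\gamma(1)\|>\rho^*>\rho$. Continuity of $t\mapsto\|\gamma(t)\|$ and the intermediate value theorem then give $t_\rho\in(0,1)$ with $\|\gamma(t_\rho)\|=\rho$, so that $\max_{t\in[0,1]}I(\gamma(t))\geq I(\gamma(t_\rho))\geq\inf_{\|(u,v)\|=\rho}I(u,v)$. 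Taking the infimum over $\gamma\in\Gamma$ yields the desired lower bound for $d_\beta^{MP}$.

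The main obstacle is clearly the quartic upper bound on $\int_\Omega H(u,v)$ in step \textbf{(A)}: the exponential nonlinearities must be controlled by the first non-trivial term of their Taylor expansion, which is only possible in the strictly subcritical Moser regime $\|\nabla u\|_2^2,\|\nabla v\|_2^2\leq\delta_0<4\pi$. This is precisely why Lemmas \ref{bd1} and \ref{atn} were prepared, and why the threshold $\rho^*$ depends on $\beta$ through the cross term.
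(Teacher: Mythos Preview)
Your proposal is correct and follows essentially the same route as the paper: both obtain $I(u,v)\ge c_0\rho^2-C(\mu_1,\mu_2,\beta)\rho^4$ on the sphere of radius $\rho$ by controlling the quadratic part via Poincar\'e and the exponential terms by a quartic bound in the subcritical Moser regime, then conclude via the intermediate value theorem that every path in $\Gamma$ crosses the sphere. The only cosmetic difference is that the paper invokes Lemma~\ref{imbed} (the $\mathcal{S}_4$ estimate) to get $\int_\Omega(e^{u^2}-1-u^2)\le \rho^4/(2\alpha^2)$, whereas you reach the same quartic bound through Lemma~\ref{atn} and Lemma~\ref{bd1}(ii)--(iii); these are equivalent uses of the same Moser--Trudinger toolbox.
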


\bp
For $\al\in(0,\mathcal{S}_4)$ and for any $(u,v)\in X$ with $\|\na u\|_2^2+\|\na v\|_2^2=\rho^2$ with $\rho>0$ sufficiently small, one has
\begin{align*}
\al\left[\int_{\Omega}u^2\left(e^{u^2}-1\right)\,\ud x\right]^{\frac12}\le\int_{\Omega}|\na u|^2\,\ud x,\qquad
\al\left[\int_{\Omega}v^2\left(e^{v^2}-1\right)\,\ud x\right]^{\frac12}\le\int_{\Omega}|\na v|^2\,\ud x,
\end{align*}
which yields
\begin{align*}
\int_{\Omega}\left(e^{u^2}-1-u^2\right)\,\ud x\le\frac{\rho^4}{2\al^2},\qquad
\int_{\Omega}\left(e^{v^2}-1-v^2\right)\,\ud x\le\frac{\rho^4}{2\al^2}.
\end{align*}
Since $\la_1,\la_2>-\Lambda_1$, for $\rho>0$ small enough we get
$$
I(u,v)\ge\frac{1}{2}\min\left\{1,\frac{\la_1+\Lambda_1}{\Lambda_1},\frac{\la_2+\Lambda_1}{\Lambda_1}\right\}\rho^2-
\frac{\rho^4}{4\al^2}(\mu_1+2\beta+\mu_2)>0.
$$
Since $I(\g(1))<0$ for any $\g\in\G$ and $\g(0)=0$, there exists $t^\ast\in(0,1)$ such that $\|\g(t^\ast)\|=\rho$. Therefore,
\[
d_\beta^{MP}\ge\inf_{\stackrel{(u,v)\in X}{\|(u,v)\|=\rho}}I(u,v)>0.\qedhere
\]
\ep
Similarly, for any $(u,v)\in X\setminus\{(0,0)\}$, $I(tu,tv)<0$ for $t>0$ large. Hence $$d_\beta^{MP}\le d_\beta.$$

\subsection{Conclusion of the proof of Theorem \ref{Th2}}
\bp {\bf Step 1.} Since $I$ satisfies the mountain pass geometry, by virtue of the Ekeland variational principle \cite{Ekeland}, there exists $\{(u_n,v_n)\}\subset X$ such that, as $n\rg\iy$
$$
I(u_n,v_n)\rg d_\beta^{MP}\,\,\mbox{and}\,\,\, I'(u_n,v_n)\rg0\,\,\,\mbox{in}\,\,\, X^\ast.
$$
From \re{am} with $p=4$, we have
\be\lab{bound2}
I(u_n,v_n)-\frac{1}{4}\lan I'(u_n,v_n),(u_n,v_n)\ran\ge\frac{1}{4}\int_{\Omega}(|\na u_n|^2+|\na v_n|^2+\la_1u_n^2+\la_2v_n^2)\,\ud x.
\ee
Recalling that $\la_1,\la_2>-\Lambda_1$, we get that $\{(u_n,v_n)\}$ is bounded in $X$. Thus $u_n\rg u_\beta$ and $v_n\rg v_\beta$ weakly in $H_0^1(\Omega)$ and a.e. in $\Omega$, as $n\rg\iy$. From \re{am} with $p=3$, we have
$$
\sup_{n}\int_{\Omega}u_n^2(e^{u_n^2}-1)\,\ud x<\iy,\,\,\,\sup_{n}\int_{\Omega}v_n^2(e^{v_n^2}-1)\,\ud x<\iy.
$$
Thanks to \cite[Lemma 2.1]{FMR} and the fact that $u_n\rg u_\beta$ and $v_n\rg v_\beta$ weakly in $H_0^1(\Omega)$ as $n\rg\iy$, up to a subsequence, there holds that $\lan I'(u_\beta,v_\beta),(\vp,\phi)\ran=0$, for any $\vp,\phi\in C_0^\iy(\Omega)$. Namely, $I'(u_\beta,v_\beta)=0$ in $X^\ast$. Moreover, by Fatou's Lemma,
\begin{align*}
d_\beta^{MP}&=\lim_{n\rg\iy}\left[I(u_n,v_n)-\frac{1}{4}\lan I'(u_n,v_n),(u_n,v_n)\ran\right]\\
&\ge I(u_\beta,v_\beta)-\frac{1}{4}\lan I'(u_\beta,v_\beta),(u_\beta,v_\beta)\ran=I(u_\beta,v_\beta).
\end{align*}
If $(u_\beta,v_\beta)\not=(0,0)$, then $(u_\beta,v_\beta)\in\mathcal{N}_{\beta}$ and
$$
d_\beta^{MP}\ge I(u_\beta,v_\beta)\ge d_\beta.
$$
By $d_\beta^{MP}\le d_\beta$, we get $I(u_\beta,v_\beta)=d_\beta$. Since $I(|u_\beta|,|v_\beta|)=d_\beta$ and $(|u_\beta|,|v_\beta|)\in\mathcal{N}_{\beta}$, by the Lagrange multiplier theorem, there exists $\kappa\in\R$ such that
$$
I'(|u_\beta|,|v_\beta|)=\kappa \ti{J}'(|u_\beta|,|v_\beta|).
$$
As
$$
\lan \ti{J}'(|u_\beta|,|v_\beta|), (|u_\beta|,|v_\beta|)\ran=-4\int_{\Omega}\left(\mu_1 u_{\beta}^4e^{u_{\beta}^2}+2\beta u_{\beta}^2v_{\beta}^2e^{|u_{\beta} v_{\beta}|}+\mu_2v_{\beta}^4e^{v_{\beta}^2}\right)\,\ud x<0,
$$
we have $\kappa=0$. Thus, $I(|u_\beta|,|v_\beta|)=d_\beta$ and $I'(|u_\beta|,|v_\beta|)=0$. By the maximum principle, we have that $(|u_\beta|,|v_\beta|)$ is a positive ground state solution of \re{q1}.
\vskip0.1in
{\bf Step 2.} Next we show that $u_\beta\not=0$ and $v_\beta\not=0$. We argue by contradiction.
\vskip0.1in
{\bf Case 1.} $u_\beta=v_\beta=0$. As done in Section 3.4, we have
$$
\lim_{n\rg\iy}\|\na u_n\|_2=0\,\,\,\mbox{or}\,\,\,\lim_{n\rg\iy}\|\na v_n\|_2=0
$$
and without loss of generality, we may assume that $\|\na u_n\|_2\rg0$, as $n\rg\iy$. Since $d_\beta^{MP}>0$, $\liminf_{n\rg\iy}\|\na v_n\|_2>0$. At the same time we have
$$
d_\beta^{MP}+o_n(1)=I(u_n,v_n)=J_{\la_2,\mu_2}(v_n)+o_n(1)
$$
and
$$
o_n(1)=I'(u_n,v_n)=J_{\la_2,\mu_2}'(v_n)+o_n(1).
$$
Since $\liminf_{n\rg\iy}\|\na v_n\|_2>0$, there exists $t_n>0$ such that $t_n\rg1$, as $n\rg\iy$ and $t_nv_n\in\mathcal{N}_{\la_2,\mu_2}$. Then,
$$
J_{\la_2,\mu_2}(v_n)=J_{\la_2,\mu_2}(t_n v_n)+o_n(1)\ge E_2+o_n(1).
$$
This yields
$$
d_\beta^{MP}\ge E_2,
$$
which contradicts the fact $d_\beta^{MP}\le d_\beta<\min\{E_1,E_2\}$ if $\beta\ge\bar{\beta}_0$ (recall Lemma \ref{asym}-(iii)).
\vskip0.1in
{\bf Case 2.} $u\not\equiv0, v=0$ or $u=0, v\not\equiv0$. Assume that $u=0, v\not\equiv0$, then $J_{\la_2,\mu_2}'(v_\beta)=0$ and $J_{\la_2,\mu_2}(v_\beta)\ge E_2$. Similarly to Section 3.4, one has that $\lim_{n\rg\iy}\|\na u_n\|_2=0$ and hence, following the previous Case 1, we can get a contradiction.
\vskip0.1in
{\bf Step 3.} For any $\beta>\bar{\beta}_0$, let $(u_\beta,v_\beta)$ be a positive ground state solution to \re{q1}. Similarly to \re{bound2}, thanks to $\la_1,\la_2>-\Lambda_1$, we have
$$
\frac{1}{4}\min\left\{1,\frac{\la_1+\Lambda_1}{\Lambda_1},\frac{\la_2+\Lambda_1}{\Lambda_1}\right\}\|(u_\beta,v_\beta)\|^2\le d_\beta.
$$
From Lemma \ref{asym}-(iv) we have $d_\beta\to 0$; thus, $u_\beta\rg0$ and $v_\beta\rg0$ strongly in $H_0^1(\Omega)$ as $n\rg\iy$. This completes the proof.
\ep

\s{The case $\beta<0$ small (weak competition): proof of Theorem \ref{Th3}.}\label{sec5}

In this section, we are concerned with positive solutions of \re{q1} in the repulsive case $\beta<0$. For this purpose, the associated functional is given by
$$
J(u,v)=\frac12\int_{\Omega}(|\na u|^2+|\na v|^2+\la_1u^2+\la_2v^2)-\int_{\Omega}\ti{H}(u,v),\,\,(u,v)\in X,
$$
where
$$
\ti{H}(u,v)=\frac{\mu_1}{2}\ti{G}(u,u)+\beta \ti{G}(u,v)+\frac{\mu_2}{2}\ti{G}(v,v),
$$
and $$\ti{G}(u,v)=e^{u_+v_+}-1-u_+v_+,\,\,\, u_+=\max\{u,0\}, v_+=\max\{v,0\}.$$
It is standard to prove that $J$ is of $C^1$-class and that critical points $(u,v)$ turn out to be weak solutions of the system
\begin{align*}
\begin{cases}
-\DD u+\la_1u=\mu_1u_+\left(e^{u_+^2}-1\right)+\beta v_+\left(e^{u_+v_+}-1\right)& \mbox{in}\,\,\,\Omega,\\
-\DD v+\la_2v=\mu_2v_+\left(e^{v_+^2}-1\right)+\beta u_+\left(e^{u_+v_+}-1\right) & \mbox{in}\,\,\,\Omega,\\
u,v\in H_0^1(\Omega).
\end{cases}
\end{align*}
Let $u_-=\max\{-u,0\}$ and take $(u_-,0)$ as a test function to get that $u$ is non-negative (observe that $u_-v_+\left(e^{u_+v_+}-1\right)=0$). Then we have
$$
-\DD u\le\left[\mu_1\left(e^{u^2}-1\right)+|\la_1|\right]u,\,\,\,x\in\Omega.
$$
Since $e^{u^2}\in L^t(\Omega)$ for any $t>0$, by the Nash-Moser iteration technique (see \cite{Gongbao} and also \cite{ZJM}), one has $u\in L^\iy(\Omega)$. Similarly, $v$ is nonnegative and $v\in L^\iy(\Omega)$. Let
\begin{align*}
c(x)=\left\{
\begin{array}{ll}
|\beta|v(x)\frac{e^{u(x)v(x)}-1}{u(x)},&\,\,\,\mbox{if}\,\,\,u(x)\not=0,\\
0,&\,\,\,\mbox{if}\,\,\,u(x)=0,
\end{array}
\right.
\end{align*}
then $c\in L^\iy(\Omega)$ and
$$
-\DD u+(c(x)+\la_1)u=\mu_1u\left(e^{u^2}-1\right)\ge0,\,\,x\in\Omega.
$$
By virtue of the weak Harnack inequality for supersolutions (see \cite[Theorem 8.18]{Tru}), $u$ is positive and the same holds for $v$.

In what follows, we borrow some ideas from J. Byeon and L. Jeanjean \cite{byeon2} (see also S. Kim \cite{Kim}) to investigate the existence of positive solutions to \re{q1} for $\beta$ slightly negative.

\subsection{Energy estimate}
Let
$$
\mathcal{S}:=\{(u,v): u\in S_{\la_1,\mu_1}, v\in S_{\la_2,\mu_2}\},
$$
where $S_{\la_i,\mu_i}$, $i=1,2$, are given in Section \ref{subsec:limitproblem}, then $\mathcal{S}\not=\emptyset$ and it is compact in $X$ by Lemma \ref{priori}. For fixed $u_{\la_1,\mu_1}\in S_{\la_1,\mu_1}$ and $u_{\la_2,\mu_2}\in S_{\la_2,\mu_2}$, let
$$\g_0(s,t)=(su_{\la_1,\mu_1}, tu_{\la_2,\mu_2})\in X,\,\,s,t\ge0.$$
By $\beta\in(-\sqrt{\mu_1\mu_2},0)$ and Lemma \ref{bd1}, for any $x,y\ge0$, we have
$$
\ti{H}(x,y)\ge \left(1+\frac{\beta}{\sqrt{\mu_1\mu_2}}\right)\left[\mu_1\ti{G}(x,x)+\mu_2\ti{G}(y,y)\right]\ge0.
$$
Moreover, since $\ti{G}(x,y)\ge x^2y^2/2$ for any $x,y\ge0$, then we have
$$
\ti{H}(x,y)\ge\left(1+\frac{\beta}{\sqrt{\mu_1\mu_2}}\right)\frac{\mu_1x^4+\mu_2y^4}{2},\,\,x,y\ge0.
$$
It follows that for any $s,t\ge0$,
\begin{align*}
J(\g_0(s,t))\le&\frac{s^2}{2}\int_{\Omega}(|\na u_{\la_1,\mu_1}|^2+\la_1u_{\la_1,\mu_1}^2)+
\frac{t^2}{2}\int_{\Omega}(|\na u_{\la_2,\mu_2}|^2+\la_2u_{\la_2,\mu_2}^2)\\
&-\frac14\left(1+\frac{\beta}{\sqrt{\mu_1\mu_2}}\right)\int_{\Omega}(s^4u_{\la_1,\mu_1}^4+t^4u_{\la_2,\mu_2}^4)\rightarrow-\iy,\,\,\mbox{as}\,\,s^2+t^2\rg\iy.
\end{align*}
By Lemma \ref{atn} there exist $s_0>1$ and $t_0\in(0,1)$ such that $J(\g_0(s_0,s_0))<0$ and for $i=1,2$,
\be\lab{brower2}
\lan J_{\la_i,\mu_i}'(t_0u_{\la_i,\mu_i}),t_0u_{\la_i,\mu_i}\ran>0,\,\,\lan J_{\la_i,\mu_i}'(s_0u_{\la_i,\mu_i}),s_0u_{\la_i,\mu_i}\ran<0.
\ee

\noindent Set
$$
\ti{d}_\beta:=\max_{t,s\in[t_0,s_0]}J(\g_0(s,t)),
$$
then it is straightforward to show that $\ti{d}_\beta\ge c>0$ for $\beta$ sufficiently small, where $c$ is independent of $\beta$. In particular, we obtain the asymptotic behavior of $\ti{d}_\beta$ in the following
\bl
$$
\ti{d}_\beta\rightarrow E_{\la_1,\mu_1}+E_{\la_2,\mu_2},\,\,\mbox{as}\,\,\beta\rg0.
$$
\el
\bp
Noting that $u_{\la_1,\mu_1}, u_{\la_2,\mu_2}\in L^\iy(\Omega)$, we have for $t,s\in[t_0,s_0]$,
$$
J(\g_0(s,t))=J_{\la_1,\mu_1}(su_{\la_1,\mu_1})+J_{\la_2,\mu_2}(u_{\la_2,\mu_2})+O(\beta),\,\,\mbox{as}\,\,\beta\rg0.
$$
Recalling that
$$
E_{\la_1,\mu_1}=J_{\la_1,\mu_1}(u_{\la_1,\mu_1})=\max_{s\in[t_0,s_0]}J_{\la_1,\mu_1}(su_{\la_1,\mu_1})
$$
and
$$
E_{\la_2,\mu_2}=J_{\la_2,\mu_2}(u_{\la_2,\mu_2})=\max_{t\in[t_0,s_0]}J_{\la_2,\mu_2}(tu_{\la_2,\mu_2}),
$$
we have, as $\beta\rg0$,
\[
\max_{t,s\in[t_0,s_0]}J(\g_0(s,t))=\max_{t,s\in[t_0,s_0]}J_{\la_1,\mu_1}(su_{\la_1,\mu_1})+ \max_{t,s\in[t_0,s_0]}J_{\la_2,\mu_2}(tu_{\la_2,\mu_2})]+O(\beta).   \qedhere
\]
\ep

\subsection{A minimax value}

For any $\delta>0$, let
\begin{align*}
\mathcal{S}^\delta:=
\left\{(u,v)\in X\Big|
\begin{array}{ll}
\hbox{$(u,v)=(\ti{u},\ti{v})+(\bar{u},\bar{v})$,}\quad \hbox{$(\ti{u},\ti{v})\in\mathcal{S}, \|(\bar{u},\bar{v})\|\le\delta$}
\end{array}
\right\},
\end{align*}
be the $\delta$-neighborhood of $\mathcal{S}$. Then $\mathcal{S}^\delta$ is bounded in $X$.
\bl\lab{unb} For $\delta>0$ sufficiently small, we have
$$
\sup_{(u,v)\in\mathcal{S}^\delta}\int_{\Omega}(u^2e^{u^2}+v^2e^{v^2})<\iy.
$$
\el
\bp
If not, there exist $\{\delta_n\}$ and $(u_n,v_n)\in\mathcal{S}^{\delta_n}$, such that as $n\rg\iy$, $\delta_n\rg0$ and
$$
\int_{\Omega}(u_n^2e^{u_n^2}+v_n^2e^{v_n^2})\rg+\iy.
$$
Then by the compactness of $\mathcal{S}$, without loss of generality, for some $(u,v)\in\mathcal{S}$, we have $u_n\rg u$ and $v_n\rg v$ strongly in $H_0^1(\Omega)$. In the following, we show that
$$
\lim_{n\rg\iy}\int_{\Omega}(u_n^2e^{u_n^2}+v_n^2e^{v_n^2})=\int_{\Omega}(u^2e^{u^2}+v^2e^{v^2}).
$$
Suppose for the moment that this hold true, then we reach a contraction thanks to the fact $e^{\al u^2}, e^{\al v^2}\in L^1(\Omega)$ for any $\al>0$, and $u,v\in L^\infty(\Omega)$.

\noindent It is enough to show that
$$
\lim_{n\rg\iy}\int_{\Omega}u_n^2e^{u_n^2}=\int_{\Omega}u^2e^{u^2}.
$$
For any $\e>0$, there exists $C_\e>0$ such that
$$
2|x^2-y^2|\le\e x^2+C_\e|x-y|^2,\,\,x,y\in\R.
$$
Then, for some $C>0$,
\begin{align*}
\int_{\Omega}|u_n^2e^{u_n^2}-u^2e^{u^2}|&\le\int_{\Omega}e^{u^2}u_n^2[e^{|u_n^2-u^2|}-1]+\int_{\Omega}e^{u^2}|u_n^2-u^2|\\
&\le \left(\int_{\Omega}e^{4u^2}\right)^{1/4}\left(\int_{\Omega}u_n^8\right)^{1/4}\left(\int_{\Omega}[e^{2|u_n^2-u^2|}-1]\right)^{1/2}\\
&\,\,\,\,\,\,+\left(\int_{\Omega}e^{2u^2}\right)^{1/2}\left(\int_{\Omega}|u_n^2-u^2|^2\right)^{1/2}\\
&\le C\left(\int_{\Omega}e^{\e u^2}[e^{C_\e|u_n-u|^2}-1]+\int_{\Omega}[e^{\e u^2}-1]\right)^{1/2}+o_n(1).
\end{align*}
By Lemma \ref{at}, from $\|\na(u_n-u)\|_2\rg0$, as $n\rg\iy$ one has
\begin{align*}
&\limsup_{n\rg\iy}\int_{\Omega}e^{\e u^2}[e^{C_\e|u_n-u|^2}-1]\\
&\le\limsup_{n\rg\iy}\left(\int_{\Omega}e^{2\e u^2}\right)^{1/2}\left(\int_{\Omega}\left[e^{2C_\e|u_n-u|^2}-1\right]\right)^{1/2}=0.
\end{align*}
Then
$$
\limsup_{n\rg\iy}\int_{\Omega}|u_n^2e^{u_n^2}-u^2e^{u^2}|\le C\left(\int_{\Omega}[e^{\e u^2}-1]\right)^{1/2}.
$$
This concludes the proof as $\e$ is arbitrary.
\ep

\noindent Thanks to the fact that $\g_0(1,1)\in\mathcal{S}$, for $\delta>0$ given above, there exists $0<\tau<\min\{1-t_0,s_0-1\}$ such that
$$
\g_0(s,t)\in\mathcal{S}^{4\delta},\,\,\mbox{if}\,\, |t-1|<\tau,\,|s-1|<\tau,
$$
and
$$
|t-1|<\tau,\,|s-1|<\tau,\,\,\mbox{if}\,\,\g_0(s,t)\in\mathcal{S}^{2\delta}.
$$
In the following, we define a minimax value. Let
\begin{align*}
\G:=
\left\{\g(s,t)\in C([t_0,s_0]^2, X)\Big|
\begin{array}{ll}
\hbox{$\g(s,t)\in\mathcal{S}^{4\delta}$, if $(s,t)\in(1-\tau,1+\tau)^2$,}\\
\hbox{$\g(s,t)=\g_0(s,t)$, if $(s,t)\not\in(1-\tau,1+\tau)^2$}
\end{array}
\right\};
\end{align*}
then obviously, $\g_0\in\G$, which is nonempty. Define
$$
\ti{c}_\beta:=\inf_{\g\in\G}\max_{s,t\in[t_0,s_0]}J(\g(s,t)),
$$
for which we know that for any $\beta$, $\ti{c}_\beta\le\ti{d}_\beta$. This implies
$$
\limsup_{\beta\rg0}\ti{c}_\beta\le E_{\la_1,\mu_1}+E_{\la_2,\mu_2}.
$$
Actually the following holds
\bl\lab{ca} It holds $
\lim_{\beta\rg0}\ti{c}_\beta=E_{\la_1,\mu_1}+E_{\la_2,\mu_2}.
$
\el
\bp
It is enough to show that for any $\g\in\G$,
$$
\max_{s,t\in[t_0,s_0]}J(\g(s,t))\ge E_{\la_1,\mu_1}+E_{\la_2,\mu_2}.
$$
Let $\g(s,t)=(\g_1(s,t),\g_2(s,t))$ and $T_{\g}: [t_0,s_0]^2\longrightarrow\R^2 $ be given by
$$
T_{\g}(s,t)=\left(\lan J_{\la_1,\mu_1}'(\g_1(s,t)),\g_1(s,t)\ran, \lan J_{\la_2,\mu_2}'(\g_2(s,t)),\g_2(s,t)\ran\right).
$$
Observe that $T_{\g}(s,t)=T_{\g_0}(s,t)$ if $(s,t)\in\pl([t_0,s_0]^2)$. By \re{brower2}, $T_{\g}(s,t)\not=(0,0)$ if $(s,t)\in\pl([t_0,s_0]^2)$ and the Brouwer degree
$$
\mbox{deg}_B(T_\g,[t_0,s_0]^2,(0,0))=\mbox{deg}_B(T_{\g_0},[t_0,s_0]^2,(0,0)).
$$
Let $T_i: [t_0,s_0]\longrightarrow\R $, $i=1,2$ be given by
$$
T_1(s):=\lan J_{\la_1,\mu_1}'(su_{\la_1,\mu_1}),su_{\la_1,\mu_1}\ran,\,\,T_2(t):=\lan J_{\la_2,\mu_2}'(tu_{\la_2,\mu_2}),tu_{\la_2,\mu_2}\ran;
$$
then $T_{\g_0}(s,t)=(T_1(s),T_2(t))$ and
\begin{align*}
&\mbox{deg}_B(T_{\g_0},[t_0,s_0]^2,(0,0))=\mbox{deg}_B(T_1,[t_0,s_0],0)\cdot\mbox{deg}_B(T_2,[t_0,s_0],0)=(-1)\cdot(-1)=1.
\end{align*}
It yields that $\mbox{deg}_B(T_\g,[t_0,s_0]^2,(0,0))=1$ and $T_\g(s^\ast,t^\ast)=(0,0)$ for some $(s^\ast,t^\ast)\in(s_0,t_0)^2$. Then
$\g_i(s^\ast,t^\ast)\in\mathcal{N}_{\la_i,\mu_i}$ for $i=1,2$. Since $\beta<0$, one has
\begin{align*}
&\max_{s,t\in[t_0,s_0]}J(\g(s,t))\ge J(\g(s^\ast,t^\ast))\\
&\ge J_{\la_1,\mu_1}(\g_1(s^\ast,t^\ast))+J_{\la_2,\mu_2}(\g_2(s^\ast,t^\ast))\ge E_{\la_1,\mu_1}+E_{\la_2,\mu_2}.  \qedhere
\end{align*}
\ep

\bl\lab{deform}
For any $\delta>0$, there exists $\sigma>0$ such that for $\beta$ small,
$$
\g_0(s,t)\in\mathcal{S}^{\delta/2}\,\,\,\mbox{if}\,\,\, J(\g_0(s,t))\ge \ti{c}_\beta-\sigma.
$$
\el
\bp
Since $J(\g_0(s,t))=J_{\la_1,\mu_1}(s u_{\la_1,\mu_1})+J_{\la_2,\mu_2}(t u_{\la_2,\mu_2})+O(\beta)$ and
$$
J_{\la_1,\mu_1}(s u_{\la_1,\mu_1})+J_{\la_2,\mu_2}(t u_{\la_2,\mu_2})<E_{\la_1,\mu_1}+E_{\la_2,\mu_2}\,\,\,\mbox{if}\,\, (s,t)\not=(1,1),
$$
by Lemma \ref{ca}, there exists $\sigma>0$ such that for $\beta$ small and $J(\g_0(s,t))\ge \ti{c}_\beta-\sigma$, we have $\|\g_0(s,t)-\g_0(1,1)\|<\delta/2$.
\ep
\subsection{Palais-Smale sequence at level $\ti{c}_\beta$}

Choose
\begin{equation}\label{choose_delta}
0<\delta<\min\left\{\rho_{\la_1,\mu_1},\rho_{\la_2,\mu_2}\right\},
\end{equation}
where $\rho_{\la_i,\mu_i}$, $i=1,2$ were given in the proof of Lemma \ref{priori}, namely in \eqref{eq:rho_lamu}.

\bo\lab{bo5} Let $\{\beta_n\}_{n=1}^\infty$ be such that $\lim_{n\rg
\infty}\beta_n=0$ and for all $n$, $\{(u_n,v_n)\}\subset \mathcal{S}^\delta$
with
$$
\lim_{n\rg\infty}J(u_n,v_n)\le E_{\la_1,\mu_1}+E_{\la_2,\mu_2}\ \mbox{and}\
\lim_{n\rg\infty}\na J(u_n,v_n)=0.
$$
Then for $\delta$ sufficiently small, there exists $(u_0,v_0)\in \mathcal{S}$ such that, up to
a subsequence, $u_n\rg u_0$ and $v_n\rg v_0$ strongly in $H_0^1(\Omega)$, as $n\rg\iy$.
\eo

\bp
For $\{(u_n,v_n)\}\subset \mathcal{S}^\delta$ given above, by Lemma \ref{unb}, for $d$ small, we know that
$$
\lim_{n\rg\iy}J(u_n,v_n)=\lim_{n\rg\iy}[J_{\la_1,\mu_1}(u_n)+J_{\la_2,\mu_2}(v_n)]\le E_{\la_1,\mu_1}+E_{\la_2,\mu_2}.
$$
Let $(u_n,v_n)=(\ti{u}_n,\ti{v}_n)+(\bar{u}_n,\bar{v}_n)$, where $(\ti{u}_n,\ti{v}_n)\in\mathcal{S}$ and $(\bar{u}_n,\bar{v}_n)\in X$ with $\|(\bar{u}_n,\bar{v}_n)\|\le \delta$ for all $n$. By the compactness of $\mathcal{S}$, there exist $(\ti{u}_0,\ti{v}_0)\in\mathcal{S}$ and $(\bar{u}_0,\bar{v}_0)\in X$ with $\|(\bar{u}_0,\bar{v}_0)\|\le \delta$, such that, up to a subsequence, $(\ti{u}_n,\ti{v}_n)\rg (\ti{u}_0,\ti{v}_0)$ strongly in $X$ and $(\bar{u}_n,\bar{v}_n)\rg (\bar{u}_0,\bar{v}_0)$ weakly in $X$ as $n\rg\iy$. Obviously, $(u_n,v_n)\rg (u_0,v_0)$ weakly in $X$ as $n\rg\iy$, where $(u_0,v_0)=(\ti{u}_0,\ti{v}_0)+(\bar{u}_0,\bar{v}_0)$. By Lemma \ref{unb} and \cite[Lemma 2.1]{FMR},
$$
J_{\la_1,\mu_1}(u_n)=J_{\la_1,\mu_1}(u_0)+\frac{1}{2}\int_{\Omega}|\na(u_n-u_0)|^2+o_n(1),
$$
and
$$
J_{\la_2,\mu_2}(v_n)=J_{\la_2,\mu_2}(v_0)+\frac{1}{2}\int_{\Omega}|\na(v_n-v_0)|^2+o_n(1).
$$
We also have $\na J_{\la_1,\mu_1}(u_0)=\na J_{\la_2,\mu_2}(v_0)=0$ in $(H_0^1(\Omega))^{\ast}$. By the choice of $\delta$, $u_0\not\equiv0, v_0\not\equiv0$. Then
$$
\lim_{n\rg\iy}J(u_n,v_n)\ge E_{\la_1,\mu_1}+E_{\la_2,\mu_2}+\frac{1}{2}\lim_{n\rg\iy}\int_{\Omega}(|\na(u_n-u_0)|^2+|\na(v_n-v_0)|^2),
$$
which yields $u_n\rg u_0$ and $v_n\rg v_0$ strongly in $H_0^1(\Omega)$, as $n\rg\iy$. Moreover, $u_0\in S_{\la_1,\mu_1}$ and $v_0\in S_{\la_2,\mu_2}$. That is, $(u_0,v_0)\in \mathcal{S}$.
\ep

\noindent For any $c\in\R$, set
$$
J^c:=\{(u,v)\in X: J(u,v)\le c\}.
$$
By Proposition \ref{bo5} and arguing as in \cite[Proposition 4]{byeon2}, there exist $\delta>0$ small and $\omega>0, \beta_0>0$ such that
\begin{center}
$\|\na J(u,v)\|\ge\omega$ for any $u\in J^{\ti{d}_\beta}\bigcap(\mathcal{S}^\delta\setminus\mathcal{S}^{\frac{\delta}{2}})$ and $|\beta|\in(0,\beta_0)$.
\end{center}
Thanks to Lemma \ref{deform}, for any $|\beta|$ sufficiently small, we obtain a bounded Palais-Smale sequence for $J$, by reasoning as in \cite[Proposition 7]{byeon2} (see also \cite{Kim}), namely the following holds

\bo\lab{bo7} Let $\delta>0$ be as in \eqref{choose_delta}. For any $|\beta|$ small, there exists
$\{(u_n,v_n)\}_n\subset J^{\ti{d}_\beta}\cap \mathcal{S}^\delta$ such that
$\|\na J(u_n,v_n)\|\rg 0$, as $n\rg\iy$.
\eo

\subsection{Conclusion of the proof of Theorem \ref{Th3}}
We are now in the position of proving Theorem \ref{Th3}.

\noindent Let $\{(u_n,v_n)\}_n$ be given as in Proposition \ref{bo7}, namely $\{(u_n,v_n)\}\subset\mathcal{S}^\delta$ with $J(u_n,v_n)\le\ti{d}_\beta$ and $\|\na J(u_n,v_n)\|\rg 0$, as $n\rg\iy$. Then, $\{(u_n,v_n)\}$ is bounded in $X$ and weakly converges to some $(u_\beta,v_\beta)\in\mathcal{S}^\delta$. Moreover, by the choice of $\delta$ one has also that $u_\beta\not\equiv0$ and $v_\beta\not\equiv0$. For any $\vp\in C_0^\iy(\Omega)$, as $n\rg\iy$ we have
$$
\int_{\Omega}\left[\na u_n\na\vp+\la_1 u_n\vp-\mu_1(u_n)_+\left(e^{(u_n)_+^2}-1\right)\vp-\beta (v_n)_+\Big(e^{(u_n)_+(v_n)_+}-1\Big)\vp\right]\rg0.
$$
By Lemma \ref{unb} and \cite[Lemma 2.1]{FMR},
$$
-\DD u_\beta+\la_1u_\beta=\mu_1(u_{\beta})_+\left(e^{(u_\beta)_+^2}-1\right)+\beta (v_\beta)_+\Big(e^{(u_\beta)_+(v_\beta)_+}-1\Big)\,\,\,\mbox{in}\,\,\,\Omega,\\
$$
Similarly,
$$
-\DD v_\beta+\la_2v_\beta=\mu_2(v_{\beta})_+\left(e^{(v_\beta)_+^2}-1\right)+\beta (u_\beta)_+\Big(e^{(u_\beta)_+(v_\beta)_+}-1\Big)\,\,\,\mbox{in}\,\,\,\Omega.\\
$$
Thus, $(u_\beta,v_\beta)$ is a positive solution to \re{q1}.

\noindent Finally, it remains to prove the asymptotic behavior of $(u_\beta,v_\beta)$, as $\beta\rg0$. By Lemma \ref{unb}, \cite[Lemma 2.1]{FMR} and the lower semi-continuity of the norm, $J(u_\beta,v_\beta)\le\ti{d}_\beta$. Therefore, as in Proposition \ref{bo5}, for $\delta>0$ small, there exists $(u_0,v_0)\in \mathcal{S}$ such that, up to a subsequence, $(u_\beta, v_\beta)\rg (u_0, v_0)$ strongly in $X$, as $\beta\rg0$.

\medbreak

\noindent{\bf Acknowledgements}\,\,  Part of this work was done while the first and third named authors were visiting Universidade de Lisboa for which they thank the members of CMAFcIO for their warm hospitality. The authors join to thank \textit{Sergio Cacciatori}, with whom they are in debt for having pointed out, during several fruitful discussions, relevant connections of this work to new aspects of the Physics involved.

\end{document}